\newcommand{\demph}{\textbf}
\theoremstyle{plain}
\newtheorem{theorem}{Theorem}[section]
\newtheorem*{theorem*}{Theorem}
\newtheorem{lemma}[theorem]{Lemma}
\newtheorem{proposition}[theorem]{Proposition}
\newtheorem{corollary}[theorem]{Corollary}
\theoremstyle{definition}
\newtheorem{definition}[theorem]{Definition}
\newtheorem{remark}[theorem]{Remark}
\newtheorem{example}[theorem]{Example}
\newtheorem{examples}[theorem]{Examples}
\tikzset{shorten <>/.style={shorten >=#1,shorten <=#1}}
\tikzset{close/.style={outer sep=-1ex}}
\newcommand{\Prod}{\mathsf{Prod}}
\newcommand{\Pro}{\mathsf{Pro}}
\newcommand{\fdVect}{\mathsf{fdVect}}
\newcommand{\Stone}{\mathsf{Stone}}
\newcommand{\MND}{\mathsf{MND}}
\newcommand{\EM}{\mathsf{EM}}
\newcommand{\Lax}{\mathsf{Lax}}
\newcommand{\Colax}{\mathsf{Colax}}
\newcommand{\El}{\mathrm{El}}
\newcommand{\PS}{\mathscr{P}}
\newcommand{\calF}{\mathscr{F}}
\newcommand{\pf}[1]{{\uparrow}#1}
\renewcommand{\frak}[1]{\mathfrak{#1}}
\newcommand{\p}{\mathfrak{p}}
\newcommand{\rmf}{\mathsf{f}}
\newcommand{\rms}{\mathsf{s}}
\DeclareMathOperator{\Frac}{Frac}
\newcommand{\CHaus}{\mathsf{CHaus}}
\DeclareFontFamily{U}{mathx}{}
\DeclareFontShape{U}{mathx}{m}{n}{<-> mathx10}{}
\DeclareSymbolFont{mathx}{U}{mathx}{m}{n}
\DeclareMathAccent{\widehat}{0}{mathx}{"70}
\DeclareMathAccent{\widecheck}{0}{mathx}{"71}
\title{Pushforward monads}
\author{Adrián Doña Mateo\thanks{University of Edinburgh, \href{mailto:adrian.dona@ed.ac.uk}{adrian.dona@ed.ac.uk}}}
\date{}
\begin{document}

\maketitle

\begin{abstract}
Given a monad $T$ on $\cat{A}$ and a functor $G \colon \cat{A} \to \cat{B}$, one can construct a monad $G_\#T$ on $\cat{B}$ subject to the existence of a certain Kan extension; this is the pushforward of $T$ along $G$. We develop the general theory of this construction in a $2$-category, giving two universal properties it satisfies. In the case of monads in $\CAT$, this gives, among other things, two adjunctions between categories of monads on $\cat{A}$ and $\cat{B}$. We conclude by computing the pushforward of several familiar monads on the category of finite sets along the inclusion $\FinSet \hookrightarrow \Set$, which produces the monad for continuous lattices, among others. We also show that, with two trivial exceptions, these pushforwards never have rank.
\end{abstract}

\section*{Introduction}

The question of how a monad $T$ on $\cat{A}$ can be transported along a functor $G \colon \cat{A} \to \cat{B}$ to produce a monad on $\cat{B}$ has a well-known answer when $G$ has a left adjoint $F$, in which case $GTF$ has a monad structure induced from that of $T$ and the adjunction. There is a lesser-known, more general answer however: even when $G$ is not a right adjoint, the right Kan extension of $GT$ along $T$, when it exists, has a monad structure. The monad obtained thus is the pushforward of $T$ along $G$, which we denote by $G_\# T$. This notion of pushforward monad can be traced back to Street's seminal paper~\cite[p.~155]{Street1972}, but has received little attention since. In this paper, we explore the properties of this construction, and study several examples.

Pushforward monads generalise the concept of codensity monads, which correspond to the case when the monad on the domain category is the identity. This case has been extensively studied, with the most famous example being the codensity monad of the inclusion $\FinSet \hookrightarrow \Set$. It is a result of Kennison and Gildenhuys~\cite[p.~341]{Kennison1971} that this is the ultrafilter monad, which Manes~\cite[Prop.~5.5]{Manes1969} identified as the monad for compact Hausdorff spaces. This is a remarkable fact: the theory of compact Hausdorff spaces is derived from the concept of finite set and some general categorical machinery, without mention of topology. An account of this and related results was given by Leinster in~\cite{Leinster2013}. Other known codensity monads of functors without a left adjoint include:
\begin{itemize}
    \item The codensity monad of the inclusion of finite groups into groups is the profinite completion monad, whose algebras are profinite groups, or equivalently, totally disconnected compact Hausdorff topological groups.
    \item The codensity monad of the inclusion of finite rings into rings is again the profinite completion monad, whose algebras are profinite rings. These are all of the compact Hausdorff topological rings, which are automatically totally disconnected. This and the previous result can be found in Devlin's PhD thesis~\cite[Thms.~10.1.30 and~10.2.2]{Devlin2016}.
    \item The Giry monad, which sends a measurable space to the space of all probability measures on it, is the codensity monad of the inclusion of a certain (not full) subcategory of the category of measurable spaces. This and a number of similar results applying to other probability monads can be found in Avery's~\cite[Thm.~5.8]{Avery2016} and Van Belle's~\cite[]{VanBelle2022}.
\end{itemize}

All of these are examples of straightforward functors whose codensity monad gives an interesting, nontrivial theory. Pushforward monads add a new axis to this list, so that one can now vary both the functor and the monad on the domain being pushed forward.

In Section~\ref{sec:general}, we develop the theory of pushforward monads in a general $2$-category, starting from their construction. It includes a universal property satisfied by pushforwards with respect to lax transformations of monads, which was stated by Street in~\cite[Thm.~5]{Street1972}. This allows us to not only see codensity monads as special cases of pushforwards, but also vice versa. We also identify a new kind of universal property of pushforwards with respect to colax transformations of monads, generalising some ideas already present in Ad\'amek and Sousa's work on $D$-ultrafilters~\cite{Adamek2021}.

In Section~\ref{sec:CAT}, we specialise these results to the $2$-category $\CAT$ of locally small categories. We give sufficient conditions for the existence of pushforwards and for the possibility of their iteration. One remarkable consequence of the results in Section~\ref{sec:general} is the existence of two adjunctions between categories of monads, the most important one being the content of Theorem~\ref{thm:mnd_adj}. The last subsection shows that codensity monads are stable (in a suitable sense) under limit completions, and relates them to Diers's theory of multiadjunctions~\cite{Diers1980, Diers1981}, and Tholen's $\frak{D}$-pro-adjunctions~\cite{Tholen1984}. This allows us, for example, to identify the category of algebras of the codensity monad of the inclusion $\Field \hookrightarrow \Ring$ as the free product completion of the category of fields, $\Prod(\Field)$.

Lastly, in Section~\ref{sec:finset}, we study explicit examples of pushforward monads. Building on Kennison and Gildenhuys's result, we examine the pushforward of three different families of monads on $\FinSet$ along the inclusion $\FinSet \hookrightarrow \Set$. The most striking of these is the pushforward of the powerset monad, which we identify as the filter monad, whose algebras are continuous lattices. As with the ultrafilter monad, this example highlights the complexity of the process: the inputs are the concept of finite set and the (finite) powerset monad, and the output is the nontrivial theory of continuous lattices. The results in this section may be summarised as follows.

\begin{theorem*}
    Let $i \colon \FinSet \hookrightarrow \Set$ be the inclusion. Then:
    \begin{enumerate}[(i)]
        \item if $E$ is a finite set, then $\Set^{i_\#((\cdot) + E)}$ is the category of $E$-pointed compact Hausdorff spaces;
        \item if $M$ is a finite monoid, then $\Set^{i_\#(M \times (\cdot))}$ is the category of compact Hausdorff spaces with a discrete $M$-action;
        \item $\Set^{i_\# \PS}$ is the category of continuous lattices.
    \end{enumerate}
\end{theorem*}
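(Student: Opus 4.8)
The plan is, for each of the three monads $T$ on $\FinSet$, to identify the pushforward monad $i_\# T = \mathrm{Ran}_i(iT)$ on $\Set$ with a familiar monad and then read off its category of algebras. The underlying endofunctor of $i_\# T$ is $X \mapsto \int_{n \in \FinSet}(Tn)^{\Set(X,n)}$, so a point of $i_\# T(X)$ is a family of maps $\alpha_n \colon \Set(X,n) \to Tn$ natural in the finite set $n$. Throughout I will treat as black boxes the Kennison--Gildenhuys theorem that $i_\#\mathrm{id} = \beta$ is the ultrafilter monad, Manes's theorem $\Set^\beta \simeq \CHaus$, and the standard fact that $\beta$ preserves finite coproducts (so that $\beta(Y+E)\cong \beta Y + E$ and $\beta(M\times Y)\cong M\times\beta Y$ for finite $E$, $M$).

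For (i), where $T=(\cdot)+E$ is the exception monad (its algebras are $E$-pointed sets), I would test a natural family $\alpha_n\colon\Set(X,n)\to n+E$ against the maps $n\to 1$: naturality forces $\alpha$ either to take values in $n\subseteq n+E$ --- in which case it is precisely an ultrafilter on $X$ --- or to be constant at a single point of $E$. Hence $i_\#((\cdot)+E)(X)\cong \beta X + E$, and an entirely analogous argument (using $n\to1$ to pin the $M$-coordinate down to a constant) gives $i_\#(M\times(\cdot))(X)\cong M\times\beta X$ for (ii). The canonical isomorphisms above are distributive laws of $(\cdot)+E$, resp.\ $M\times(\cdot)$, over $\beta$, and I would argue --- either by a direct check of the multiplication, or more cleanly from the universal property of the pushforward in Section~\ref{sec:general} applied to the evident lift of $i$ to the algebras --- that the monad structure the pushforward induces on these functors is exactly that of the resulting composite monad. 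Its algebras are then a compact Hausdorff space together with a compatible $T$-algebra structure: an $E$-point, resp.\ an action of $M$ by continuous maps; and the algebra morphisms are the continuous point-preserving, resp.\ continuous $M$-equivariant, maps. This gives the category of $E$-pointed compact Hausdorff spaces in (i) and of compact Hausdorff spaces with a discrete $M$-action in (ii).

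Case (iii), with $T=\PS$ the powerset monad (whose $\FinSet$-algebras are the finite lattices), is the substantial one. Here a natural family $\alpha_n\colon\Set(X,n)\to\PS n$ assigns to each finite colouring $f\colon X\to n$ a set of colours, compatibly with pushing colourings forward along $h\colon n\to m$, i.e.\ $h[\alpha_n(f)]=\alpha_m(hf)$. I claim these correspond bijectively to filters on $X$ (including the improper filter): from a filter $\calF$, let $\alpha_n(f)$ be the least $S\subseteq n$ with $f^{-1}(S)\in\calF$ --- this exists because $n$ is finite and these $S$ form a filter of $\PS n$ --- and conversely put $\calF=\{A\subseteq X : \alpha_2(\chi_A)\subseteq\{1\}\}$; using naturality against the characteristic maps $n\to2$ one sees that $f^{-1}(S)\in\calF \iff \alpha_n(f)\subseteq S$, which shows $\calF$ is a filter and that the two constructions are mutually inverse. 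Transporting the monad structure across this bijection --- the unit sends $x$ to its principal ultrafilter, and the multiplication, induced by union $\PS\PS\to\PS$, becomes the usual (Kowalsky) sum of filters --- identifies $i_\#\PS$ with the filter monad $\bF$ on $\Set$. The statement then follows from the theorem of Day that $\Set^{\bF}$ is the category of continuous lattices.

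The main obstacle is case (iii), on two fronts. First, the end computation identifying $i_\#\PS$ with the filter monad is delicate: one must include the improper filter (it accounts for the value of $i_\#\PS$ on small sets --- e.g.\ $i_\#\PS(\emptyset)$ is a one-point set --- matching the one-object lattice), and one must actually verify the transported multiplication, not merely the underlying endofunctor. Second, the conclusion relies on the external identification $\Set^{\bF}\simeq$ continuous lattices, including on morphisms, which is a nontrivial input. Cases (i) and (ii) are routine once the distributive-law description is in hand, the only mild care point --- shared with (iii) --- being that the monad structure produced by the pushforward, and not just its underlying functor, must be tracked. One could alternatively reorganise everything around the general theory of Section~\ref{sec:general}: by the composition law for Kan extensions and the fact that right Kan extension along $U^T$ is precomposition with its left adjoint, $i_\#T\cong\mathrm{Ran}_{iU^T}(iU^T)$ is the codensity monad of $\FinSet^T\xrightarrow{U^T}\FinSet\xrightarrow{i}\Set$; but this reformulation does not by itself shorten the three identifications above.
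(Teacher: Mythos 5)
Your proposal is correct and follows essentially the same route as the paper: the natural-family (``integration operator'') description of the pointwise Kan extension, the naturality-against-$n \to 1$ argument giving $i_\#P_E^\rmf(X) \cong \beta X + E$ and $i_\#A_M^\rmf(X) \cong M \times \beta X$ in (i)--(ii) together with the distributive-law description, the filter correspondence via characteristic maps $\chi_A$ in (iii), and the same external inputs (Kennison--Gildenhuys, Manes, Day). The only organisational difference is in how the monad structure is tracked: the paper exhibits $P_E\beta$, $A_M\beta$ and the filter monad $F$ as monoids in $\cat{K}(i,T^\rmf)$, so terminality of the pushforward yields a monad map for free and only bijectivity of its components needs checking, whereas you propose to transport the structure across the bijection and verify the multiplication directly --- a heavier but equivalent computation, and your flagged care points (the improper filter, checking multiplication and not just the endofunctor) are exactly the ones the paper's argument absorbs.
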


\vspace{1em}
\noindent \textbf{Acknowledgements.} I would like to thank Tom Leinster for his invaluable guidance and many constructive discussions, and for introducing me to the concept of pushforward monads, which he in turn learned about from Gejza Jen\v{c}a. I would also like to thank Nathanael Arkor for his comments and questions, and for pointing me to the work of Diers and Tholen. Lastly, I would like to thank Zev Shirazi, who first told me about monad lifts and with whom I had many fruitful conversations during CT2024. This work has been funded by a PhD Scholarship from The Carnegie Trust for the Universities of Scotland.

\section{Pushforward monads}
\label{sec:general}

In this section, we spell out the construction of the pushforward of a monad and give its general properties. We work in the setting of a $2$-category $\cat{K}$, although from Section \ref{sec:CAT} onwards we will always take $\cat{K}$ to be $\CAT$. The pushforward construction makes sense in any bicategory, but we will do everything in the strict case for simplicity. We write $\alpha \cdot \beta$ for the vertical composite of two $2$-cells $\alpha$ and $\beta$, and $\alpha * \beta$ for their horizontal composite.

Given a $0$-cell $z$ and a $1$-cell $g \colon x \to y$ in $\cat{K}$, we write $g_* \colon \cat{K}(z,x) \to \cat{K}(z,y)$ and $g^* \colon \cat{K}(y,z) \to \cat{K}(x,z)$ for the functors given by composing with $g$ on the left and on the right, respectively. Given a monad $t$ on $x$, and taking $z = y$, we can form the comma category $g^*{\downarrow}gt$, whose objects are pairs $(s,\sigma)$ such that $s \in \cat{K}(y,y)$ and $\sigma \colon sg \to gt$.

\begin{proposition}\label{prop:comma_monoidal}
    Let $t$ be a monad on $x$, and $g \colon x \to y$ be a $1$-cell. The comma category $g^*{\downarrow}gt$ admits a strict monoidal structure such that the forgetful functor $g^*{\downarrow}gt \to \cat{K}(y,y)$ is strict monoidal.
\end{proposition}
\begin{proof}
    Let us write $\eta$ and $\mu$ for the unit and multiplication $2$-cells of $t$, respectively. Given two objects $(s,\sigma)$ and $(s',\sigma')$ of $g^*{\downarrow}gt$, we define $(s',\sigma') \otimes (s,\sigma)$ to be the composite:
    \[\begin{tikzcd}[column sep=3em, row sep=2.7em]
    x \ar[r, "g"] \ar[d, "t"] \ar[dd, bend right=50, ""{name=N, right}, "t"'] &
    y \ar[d, "s"] \ar[dl, shorten <>=16.5pt, Rightarrow, pos=0.4, "\sigma"']\\
    x \ar[r, "g"] \ar[d, "t"] \ar[Rightarrow, shorten <>=1pt, to=N, pos=0.4, "\mu"'{yshift=0.5ex}] &
    y \ar[d, "s'"] \ar[dl, shorten <>=16.5pt, Rightarrow, pos=0.4, "\sigma'"'{xshift=0.5ex}]\\
    x \ar[r, "g"] & y
    \end{tikzcd}\]
    The monoidal unit is simply $g \eta \colon g \to gt$. A routine calculation shows that the axioms for a strict monoidal category follow from the axioms for the monad $t$. That the forgetful functor $g^*{\downarrow}gt \to \cat{K}(y,y)$ is strict monoidal is clear.
\end{proof}

A right extension of $gt$ along $g$ is precisely a terminal object of $g^*{\downarrow}gt$. As the terminal object of a monoidal category, it has a unique monoid structure. Taking its image under the forgetful functor $g^*{\downarrow}gt \to \cat{K}(y,y)$ gives a monoid in $\cat{K}(y,y)$, i.e. a monad on $y$.

\begin{definition}\label{def:pushforward}
    Let $t$ be a monad on $x$ and $g \colon x \to y$ be a $1$-cell. The \demph{pushforward} of $t$ along $g$ is the right extension of $gt$ along $g$, with its canonical monad structure described above. When it exists, we denote it by $g_\# t$.
\end{definition}

We can give a more explicit description of the monad structure of $g_\#t$. Let $(g_\#t,\eps)$ be a right extension of $gt$ along $g$, as depicted by the following diagram.
\[\begin{tikzcd}[column sep=3em, row sep=2.7em]
    x \ar[r, "g"] \ar[d, "t"'] & y \ar[d, "g_\#t"] \ar[dl, shorten <>=16.5pt, Rightarrow, "\eps"', pos=0.4] \\
    x \ar[r, "g"'] & y
\end{tikzcd}\]
We will usually call $\eps$ the \demph{counit} of the pushforward $g_\#t$. The unit and multiplication of $g_\# t$ are the unique $2$-cells $\eta^{g_\#t}$ and $\mu^{g_\#t}$ making
\begin{equation}\label{eq:monad_struct}
  \begin{tikzcd}[column sep=0.8em, row sep=2.7em]
    & g \ar[dl, "(\eta^{g_\#t}) g"'] \ar[dr, "g \eta^t"] \\
    (g_\#t)g \ar[rr, "\eps"'] && gt
  \end{tikzcd}
  \qquad \text{and} \qquad
  \begin{tikzcd}[column sep=3em, row sep=2.7em]
    (g_\#t)^2 g \ar[d, "\mu^{g_\#t}g"'] \ar[r, "(g_\#t)\eps"] & (g_\#t)gt \ar[r, "\eps t"] & gt^2 \ar[d, "g\mu^t"] \\
    (g_\#t)g \ar[rr, "\eps"'] && gt
  \end{tikzcd}
\end{equation}
commute.

\begin{examples}\label{ex:pushforward}
    \begin{enumerate}[(i)]
    \item[]
    \item \label{ex:pf:adjoint}
    If $g \colon x \to y$ is right adjoint to $f$, with unit $\delta$ and counit $\nu$, then the right extension of a $1$-cell $h \colon x \to z$ along $g$ is $(hf, h\nu)$. In particular, $g_\# t = gtf$, with its well-known monad structure:
    \[\eta^{g_\# t} = g\eta^t f \cdot \delta \qquad \text{and} \qquad \mu^{g_\# t} = g \mu^t f \cdot gt \nu tf.\]

    \item \label{ex:pf:codensity}
    The pushforward of the identity monad along $g$ is the \demph{codensity monad} of $g$. In this case, the underlying endofunctor of the monad is the right extension of $g$ along itself. By the previous example, the codensity monad of a right adjoint is the monad induced by the adjunction. The most famous example of a codensity monad in the $2$-category $\CAT$ of a functor that is not a right adjoint is that of the inclusion functor $\FinSet \hookrightarrow \Set$, which was identified as the ultrafilter monad by Kennison and Gildenhuys~\cite[p.~341]{Kennison1971}.
    
    \item \label{ex:pf:terminal}
    Let $\mathsf{1}$ be the terminal category. There is a unique monad on $\mathsf{1}$, namely the identity monad. A functor $g \colon \mathsf{1} \to \cat{C}$ corresponds to an object $x$ of $\cat{C}$. If $\cat{C}$ is locally small and has $\Set$-indexed powers, then the codensity monad of $g$ is given by $y \mapsto x^{\cat{C}(y,x)}$. This is called the \demph{endomorphism monad} of $x$. Proposition~\ref{thm:univ} and the discussion following it show that for any monad $T$ on $\cat{C}$, the $T$-algebra structures on $x$ correspond to monad maps (see~\ref{def:monad_morphisms}\ref{def:mm:map}) from $T$ to the endomorphism monad of $x$.

    \item Let $\cat{K}$ be a monoidal category, thought of as a one-object bicategory. Monads in $\cat{K}$ are simply monoids, and right extension along an object $x \in \cat{K}$ is by definition a right adjoint to $- \otimes x$. If $m$ is a monoid, and $- \otimes x \dashv [x,-]$, then the pushforward $x_\#m$ is $[x,x\otimes m]$, with unit and multiplication the respective transposes of
    \[
    \begin{tikzcd}
      x \ar[d, "x \otimes \eta^m"] \\
      x \otimes m
    \end{tikzcd}
    \qquad \text{and} \qquad
    \begin{tikzcd}
      {[x,x\otimes m]} \otimes [x,x\otimes m] \otimes x \ar[d, "{[x,x\otimes m] \otimes \mathrm{ev}_{x \otimes m}}"] \\
      {[x,x\otimes m]} \otimes x \otimes m \ar[d, "{\mathrm{ev}_{x \otimes m} \otimes m}"] \\
      x \otimes m \otimes m \ar[d, "x \otimes \mu^m"] \\
      x \otimes m.
    \end{tikzcd}\]
    \end{enumerate}
\end{examples}

The pushforward construction was first introduced by Street in~\cite[p.~155]{Street1972}. In fact, he defines it in terms of a universal property (given in Theorem~\ref{thm:univ}) with respect to the forgetful functor from the $2$-category of monads in $\cat{K}$, denoted by $\MND(\cat{K})$, to $\cat{K}$, and then shows that the right extension in Definition~\ref{def:pushforward} has this universal property. In~\cite{Lack2002}, Lack and Street defined a related $2$-category $\EM(\cat{K})$ which is to be thought of as the free completion of $\cat{K}$ under Eilenberg--Moore objects. These objects generalise the usual Eilenberg--Moore categories of a monad in $\CAT$ to the context of $2$-categories, and are given by a particular weighted limit (identified by Street in~\cite[p.~178]{Street1976}). The $2$-categories $\MND(\cat{K})$ and $\EM(\cat{K})$ have the same $0$- and $1$-cells, but the latter has more $2$-cells in general. We will not be concerned with the $2$-cells, so for simplicity we will only refer to $\EM(\cat{K})$. Next, we introduce its $1$-cells with some standard terminology.

\begin{definition}\label{def:monad_morphisms}
    Let $t$ be a monad on $x$, and $s$ be a monad on $y$.
    \begin{enumerate}[(i)]
        \item \label{def:mm:lax} A \demph{lax transformation} of monads from $t$ to $s$ is a pair $(g, \phi)$ of a $1$-cell $g \colon x \to y$ and a $2$-cell $\phi \colon sg \to gt$ such that 
        \begin{equation}\label{eq:lax}
            \phi \cdot \eta^s g = g\eta^t \qquad \text{and} \qquad \phi \cdot \mu^s g = g \mu^t \cdot \phi t \cdot s \phi.
        \end{equation}
        Equivalently, it is a $1$-cell $(x,t) \to (y,s)$ in $\EM(\cat{K})$. Let $\Lax(t,s)$ denote the (large) set of lax transformations from $t$ to $s$, and $\Lax_g(t,s)$ denote the subset of those whose $1$-cell part is $g$.

        \item \label{def:mm:colax} A \demph{colax transformation} of monads from $t$ to $s$ is a pair $(g,\psi)$ of a $1$-cell $g \colon x \to y$ and a $2$-cell $\psi \colon gt \to sg$ such that
        \begin{equation}\label{eq:colax}
            \psi \cdot g \eta^t = \eta^s g \qquad \text{and} \qquad \psi \cdot g \mu^t = \mu^s g \cdot s \psi \cdot \psi t.
        \end{equation}
        Equivalently, it is a $1$-cell $(x,t) \to (y,s)$ in $\EM(\cat{K}^\opp)^\opp$. Let $\Colax(t,s)$ denote the (large) set of colax transformations from $t$ to $s$, and $\Colax_g(t,s)$ denote the subset of those whose $1$-cell part is $g$.

        \item \label{def:mm:map} If $x = y$, a \demph{map of monads} on $x$ (or simply a \demph{monad map}) from $t$ to $s$ is a $2$-cell $\theta \colon t \to s$ such that
        \begin{equation}\label{eq:mm}
            \theta \cdot \eta^t = \eta^s \qquad \text{and} \qquad \theta \cdot \mu^t = \mu^s \cdot (\theta * \theta).
        \end{equation}
        Let $\Mnd_x(t,s)$ denote the (large) set of monad maps from $t$ to $s$. Note that $\Mnd_x(t,s) = \Lax_{1_x}(s,t) = \Colax_{1_x}(t,s)$.
    \end{enumerate}
\end{definition}

\begin{examples}\label{ex:monad_morphisms}
    Let $t$ be a monad on $x$.
    \begin{enumerate}[(i)]
        \item \label{ex:mm:eta} The unit $\eta^t$ is a monad map $1_x \to t$.
        \item \label{ex:mm:mu} The pair $(t,\mu^t)$ is a lax transformation $1_x \to t$, since in this case the aximos in~\eqref{eq:lax} become the left unitality and associativity of $t$. Dually, $(t,\mu^t)$ is a colax transformation $t \to 1_x$.
        \item \label{ex:mm:eps} Let $g \colon x \to y$ be a $1$-cell such that $g_\#t$ exists and has counit $\eps$. Then~\eqref{eq:monad_struct} shows that $(g,\eps)$ is a lax transformation $t \to g_\# t$. In fact, there is a natural bijection between monoids in $g^*{\downarrow}gt$ (with its strict monoidal structure given by Proposition~\ref{prop:comma_monoidal}) and monads on $y$ equipped with a lax transformation from $t$ whose $1$-cell part is $g$.
    \end{enumerate}
\end{examples}

With these definitions, $\Lax$ and $\Colax$ become $1$-categories whose objects are monads in $\cat{K}$, namely the $1$-truncations of $\EM(\cat{K})$ and $\EM(\cat{K}^\opp)^\opp$, respectively. In general, these categories are not even locally small. The composite of $(g,\phi) \in \Lax(t,s)$ and $(g',\phi') \in \Lax(s,r)$ is the pair $(g'g, \phi'g \cdot g'\phi)$. Dually, the composite of $(g,\psi) \in \Colax(t,s)$ and $(g',\psi) \in \Colax(s,r)$ is the pair $(g'g, g'\psi \cdot \psi'g)$. In particular, $\Mnd_x$ becomes a $1$-category, whose objects are monads on $x$, and which is a subcategory of $\cat{K}(x,x)$ and $\Colax$, and contravariantly of $\Lax$.

The pushforward construction is a functor in the monad argument, and in fact one which is as continuous as the $1$-cell along which on one pushes forward, as given by the next lemma. It is not quite functorial in the $1$-cell argument, but it does preserve composition laxly (see Lemma~\ref{lem:pf_preserve}).

\begin{lemma}\label{lem:pf_functor}
    Let $g \colon x \to y$ be a $1$-cell. Then $g_\#$ is a functor $\Mnd_x \to \Mnd_y$ insofar as it is defined. Moreover, $g_\#$ preserves the limit of any diagram $D \colon \cat{I} \to \Mnd_x$ such that $g_*$ preserves the limit of $\cat{I} \xrightarrow{D} \Mnd_x \to \cat{K}(x,x)$.
\end{lemma}
\begin{proof}
    Let $t$ and $s$ be monads on $x$ such that $g_\#t$ and $g_\#s$ exist, with counits $\eps^t$ and $\eps^s$ respectively. Given $\theta \in \Mnd_x(t,s)$, there is a unique $2$-cell $g_\#\theta \colon g_\#t \to g_\#s$ such that $\eps^s \cdot (g_\# \theta) g = g\theta \cdot \eps^t$. This is a monad map: the equation
    \[\eps^s \cdot (g_\# \theta \cdot \eta^{g_\#t}) g = g\theta \cdot \eps^t \cdot \eta^{g_\#t} g \stackrel{\eqref{eq:monad_struct}}{=} g\theta \cdot g\eta^t \stackrel{\eqref{eq:mm}}{=} g\eta^s \stackrel{\eqref{eq:monad_struct}}{=} \eps^s \cdot \eta^{g_\#s} g\]
    implies that $g_\# \theta \cdot \eta^{g_\#t} = \eta^{g_\#s}$ by uniqueness of factorisations through $\eps^s$. The proof that $g_\#\theta$ preserves multiplication is similar. As is often the case, the uniqueness of $g_\#\theta$ guarantees functoriality.

    For the statement about of limits, recall that the functor $U_x \colon \Mnd_x \to \cat{K}(x,x)$ creates (and hence preserves and reflects) limits, being the forgetful functor from a category of monoids in a monoidal category. Let $(l, \lambda)$ be a limit cone for $D$. Then $(g_* U_x l, g_* U_x \lambda)$ is a limit cone for $g_* U_x D$ in $\cat{K}(x,y)$ by assumption. Since $\ran_g$ is a partial right adjoint to $g^*$, it preserves limits whenever it is defined, so $(\ran_g g_* U_x l, \ran_g g_* U_x \lambda)$ is still a limit cone. But this is precisely the image under $U_y$ of the cone $(g_\# l, g_\# \lambda)$, which is then a limit cone since $U_y$ reflects limits.
\end{proof}

The following theorem was stated without proof by Street, albeit in a slightly different language. It gives the universal property of pushforward monads.

\begin{theorem}[{Street~\cite[Thm.~5]{Street1972}}]\label{thm:univ}
    Let $g \colon x \to y$ be a $1$-cell, $t$ be a monad on $x$, and $s$ be a monad on $y$. If $g_\# t$ exists, then there is a bijection
    \[\Lax_g(t,s) \cong \Mnd_y(s, g_\# t)\]
    natural in $t$ and $s$.
\end{theorem}
\begin{proof}
    Let $\eps$ be the counit of $g_\# t$, so that $(g,\eps) \in \Lax(t,g_\#t)$ by Example~\ref{ex:monad_morphisms}\ref{ex:mm:eps}. A monad map $\theta \colon s \to g_\#t$ is in particular a lax transformation $(1_y,\theta) \colon g_\#t \to s$, so composing with $(g,\eps)$ gives an assignment $\Mnd_y (s,g_\#t) \to \Lax_g(t,s)$. Given $(g,\phi) \in \Lax_g(t,s)$, the universal property of the right extension defining $g_\#t$ gives a unique $2$-cell $\widehat\phi \colon s \to g_\#t$ such that $\phi = \eps \cdot \widehat\phi g$. It suffices to check that $\widehat\phi$ is a monad map, since then this construction gives an inverse to the former assignment. We have
    \[\eps \cdot (\widehat\phi \cdot \eta^s)g = \eps \cdot \widehat\phi g \cdot \eta^s g = \phi \cdot \eta^s g \stackrel{\eqref{eq:lax}}{=} g\eta^t \stackrel{\eqref{eq:monad_struct}}{=} \eps \cdot \eta^{g_\#t}g,\]
    which implies that $\widehat\phi \cdot \eta^s = \eta^{g_\#t}$ by uniqueness of factorisations through $\eps$. The proof that $\widehat\phi$ preserves multiplication is similar.

    Naturality in $t$ is with respect to monad maps in $\Mnd_x$, while that in $s$ is with respect to monad maps in $\Mnd_y$. They both follow easily from the fact that the bijection is given by composition with $(g,\eps)$ and that $g_\#$ is a functor.
\end{proof}

The significance of this universal property becomes clear when $\cat{K}$ has Eilenberg--Moore objects. Recall that this is a completeness condition under a certain class of weighted limits, which is satisfied, for example, by $\CAT$.

\begin{proposition}[Lack, Street]\label{prop:lax_trans_EM_lift}
  Let $\cat{K}$ have Eilenberg--Moore objects, and write $x^t$ for the Eilenberg--Moore object of a monad $t$ on $x$. Let $t$ be a monad on $x$ and $s$ be a monad on $y$. Then there is a bijection between lax transformations of monads $(g,\phi) \in \Lax_g(t,s)$ and $1$-cells $g^\phi$ such that the square
  \[\begin{tikzcd}
      x^t \ar[r, "g^\phi"] \ar[d, "u^t"'] & y^s \ar[d, "u^s"] \\
      x \ar[r, "g"'] & y
  \end{tikzcd}\]
  commutes.
\end{proposition}
\begin{proof}
  This follows from the analysis of $\EM(\cat{K})$ by Lack and Street in~\cite[\S2.2]{Lack2002}.
\end{proof}

The bijection in Theorem~\ref{thm:univ} then becomes a correspondence between dashed $1$-cells as follows:
\begin{equation}\label{eq:monadic_reflection}
    \begin{tikzcd}
        x^t \ar[r, dashed, "\forall"] \ar[d, "u^t"'] & y^s \ar[d, "u^s"] \\
        x \ar[r, "g"'] & y
    \end{tikzcd}
    \qquad \qquad
    \begin{tikzcd}[column sep=5pt]
        y^{g_\# t} \ar[rr, dashed, "\exists !"] \ar[dr, "u^{g_\#t}"', near start] && y^s \ar[dl, "u^s", near start] \\
        & y
    \end{tikzcd}
\end{equation}
In other words, $1$-cells over $y$ from $gu^t$ to $u^s$ correspond to $1$-cells over $y$ from $u^{g_\#t}$ to $u^s$. The $1$-cells of the form $u^s$ are called \demph{monadic}, so this gives a way of `approximating' $gu^t$ by a monadic $1$-cell. 

If we consider the case where $t$ is the identity monad on $x$, then $u^t$ is an isomorphism, and $g_\# t$ is the codensity monad of $g$. Its universal property implies that the assignment $g \mapsto u^{g_\#1}$ gives a (partially defined) reflection from the slice category $\cat{K}/y$ to its full subcategory on the monadic $1$-cells. Thus, when $g_\#1$ exists, we speak of $u^{g_\#1}$ as the \demph{monadic reflection} of $g$.

\begin{lemma}\label{lem:pf_preserve}
    Let $g \colon x \to y$ and $h \colon y \to z$ be $1$-cells, and $t$ a monad on $x$. Then there exists a canonical map of monads on $z$
    \[h_\#(g_\# t) \to (hg)_\# t\]
    natural in $t$, assuming the domain and codomain exist. Moreover, this map is an isomorphism if $h$ preserves the right extension of $gt$ along $g$.
\end{lemma}
\begin{proof}
    Let $\eps^g$, $\eps^h$ and $\eps^{hg}$ be the counits of $g_\# t$, $h_\#(g_\#t)$ and $(hg)_\#t$, respectively. By Example~\ref{ex:monad_morphisms}\ref{ex:mm:eps}, we have $(g,\eps^g) \in \Lax(t, g_\#t)$ and $(h,\eps^h) \in \Lax(g_\#t, h_\#(g_\#t))$, so their composite $(hg, \eps^h g \cdot h \eps^g)$ is a lax transformation $t \to h_\#(g_\#t)$ whose $1$-cell part is $hg$. By Theorem~\ref{thm:univ}, this corresponds to a unique monad map $h_\#(g_\# t) \to (hg)_\#t$. Naturality follows from the fact that pushforward is a functor, and that the bijection in Theorem~\ref{thm:univ} is natural in $t$.

    If $h$ preserves $\ran_g gt$, then $\ran_g hgt = (h(\ran_g gt), h\eps^g)$. By general properties of extensions, $\ran_h (\ran_g hgt) = \ran_{hg} hgt$, but the former may now be given as $(\ran_h h(\ran_g gt), \eps^h g \cdot h \eps^g)$. Thus the factorisation of $\eps^hg \cdot h\eps^g$ through $\eps^{hg}$ is an isomorphism.
\end{proof}

Informally, one can interpret this lemma as saying that the assignment $x \mapsto \Mnd_x$ and $g \mapsto g_\#$ is a lax $2$-functor from $\cat{K}_0$ (the underlying $1$-category of $\cat{K}$) to a strict $2$-category of categories, (partial) functors and natural transformations.

\begin{remark}\label{rmk:pf_preserve}
    In the notation of Lemma~\ref{lem:pf_preserve}, for $h$ to preserve $\ran_g gt$ it suffices for one of $g$ and $h$ to be a right adjoint. This is because in any $2$-category, right extensions along right adjoints are absolute (i.e.\ preserved by any $1$-cell), and right adjoints preserve right extensions.

    In $\CAT$, it is also sufficient for the right Kan extension defining $g_\# t$ to be pointwise, and for $h$ to preserve the limits involved. This is true, for example, if $x$ is a small category, $y$ is complete and $h$ preserves all small limits.

    For an example where the map is not an isomorphism, one can take $\cat{K} = \CAT$, $g$ to be the unique functor $\mathsf{0} \to \mathsf{1}$ from the initial to the terminal category, $h$ to be the functor $\mathsf{1} \to \Set$ picking out a set $X$ with at least two elements, and $t$ to be the identity monad on $\mathsf{0}$. Then $g_\#t$ is the identity monad on $\mathsf{1}$, and $h_\#(g_\# t)$ is the endomorphism monad of $X$, as in Example~\ref{ex:pushforward}\ref{ex:pf:terminal}, while $(hg)_\# t$ is constant at the terminal set.
\end{remark}

Note that~\eqref{eq:monadic_reflection} says that $u^{g_\#t}$ is the monadic reflection of $gu^t$, but the same is true for $u^{(gu^t)_\#1}$ assuming $(gu^t)_\# 1$ exists. Since $u^t$ is a right adjoint, the previous remark and lemma imply that $(gu^t)_\#1 = g_\# (u^t_\#1)$, and by Example~\ref{ex:pushforward}\ref{ex:pf:adjoint} we have $u^t_\# 1 = t$. Altogether, this shows:

\begin{corollary}\label{cor:pf_codensity}
    Let $\cat{K}$ be a $2$-category with Eilenberg--Moore objects, $g \colon x \to y$ be a $1$-cell in $\cat{K}$, and $t$ a monad on $x$. Then $g_\#t$ is the codensity monad of $gu^t$.
\end{corollary}

This corollary seems to suggest that one need not think about pushforwards, and that studying codensity monads would suffice. However, the pushforward construction has many benefits that are lost by passing to the associated codensity monad. Most importantly, it gives a functor between categories of monads, and so it can be used to produce maps between complex monads from maps between simple ones.

\subsection{Colax transformations and \texorpdfstring{$g$}{g}-determined monads}\label{ssec:g-det}

In this subsection, we study a universal property that pushforwards have with respect to colax transformations of monads. This work was inspired by Ad\'amek and Sousa's paper~\cite{Adamek2021}, where they give explicit descriptions of the codensity monads of the inclusions $\cat{A}_\mathsf{fp} \hookrightarrow \cat{A}$ for several locally finitely presentable categories $\cat{A}$. They say a monad $T$ on $\cat{A}$ has the \textit{limit property} with respect to a full embedding $i \colon \cat{B} \hookrightarrow \cat{A}$ if $(T,1_{Ti})$ is the pointwise right Kan extension of $Ti$. They then exhibit a certain monad $T$ with this property, and show that the codensity monad of $i$ is the smallest submonad of $T$ with the limit property. Our results will generalise this to the context of pushforward monads in a $2$-category.

We will it find useful to dualise some of the results in this section. Formally, these duals are obtained by passing to the $2$-category $\cat{K}^\opp$ where $1$-cells have been reversed. Extensions then become lifts: a \demph{right lift} of a $1$-cell $g \colon x \to z$ through a $1$-cell $f \colon y \to z$ is a terminal object $(h, \eps)$ of $f_*{\downarrow}g$, which we will denote by $\rift_f g$. Dually, a \demph{left lift} of $g$ through $f$ is an initial object of $g{\downarrow}f_*$, denoted by $\lift_f g$. Lifts seem to be less common in the literature than extensions, but they are famously used in Street and Walters's theory of Yoneda structures~\cite[\S1]{Street1978}.

\begin{definition}\label{def:g-det}
    Let $g \colon x \to y$ and $h \colon y \to z$ be $1$-cells. We say that $h$ is \demph{$g$-determined} if $(h, 1_{hg})$ is a right extension of $hg$ along $g$. Dually, we say $g$ is \demph{$h$-opdetermined} if it is $h$-determined in $\cat{K}^\opp$, i.e.\ if $(g, 1_{hg}) = \rift_ h hg$. A monad is $g$-(op)determined if its $1$-cell part is.
\end{definition}

The condition in this definition can be replaced by an apparently weaker one, as shown by the next lemma.

\begin{lemma}\label{lem:g-det_iff}
    In the notation of the previous definition, $h$ is $g$-determined iff there exists a monic $2$-cell $\eps$ such that $\ran_g hg = (h, \eps)$.
\end{lemma}
\begin{proof}
    The forwards implication is clear. Now let $\ran_g hg = (h,\eps)$ with $\eps$ monic, and let $\alpha, \beta \colon h \to h$ be the unique $2$-cells such that $\eps \cdot \alpha g = \eps \cdot \eps$ and $\eps \cdot \beta g = 1_{hg}$. Since $\eps$ is monic, this implies that $\alpha g = \eps$. Then
    \[\eps \cdot (\alpha \cdot \beta)g = \eps \cdot \alpha g \cdot \beta g = \eps \cdot \eps \cdot \beta g = \eps\]
    and
    \[\eps \cdot (\beta \cdot \alpha)g = \eps \cdot \beta g \cdot \alpha g = 1_{hg} \cdot \alpha g = \eps,\]
    which imply that $\alpha$ and $\beta$ are mutual inverses, since $\eps$ is the counit of a right extension along $g$. It follows that $\beta$ is an isomorphism $(h, 1_{hg}) \cong (h, \eps)$ in $g^*{\downarrow}hg$, so $(h,1_{hg})$ is also a right extension.
\end{proof}

\begin{example}
    Let $g \colon x \to y$ be right adjoint to $f$, with unit and counit $\eta$ and $\eps$, respectively. Then $h \colon y \to z$ is $g$-determined iff $h\eta$ is an isomorphism. Indeed, we have $\ran_g hg = (hgf, hg\eps)$. The claim follows easily from the fact that $h\eta$ is the unique morphism $(h, 1_{hg}) \to (hgf, hg\eps)$ in $g^*{\downarrow}hg$, since the codomain is terminal.

    It follows that $f$ is $g$-determined iff the adjunction $f \dashv g$ is idempotent. In $\CAT$, we also have that if $G \colon \cat{A} \to \cat{B}$ is a coreflection, then any functor with domain $\cat{B}$ is $G$-determined.
\end{example}

\begin{theorem}\label{thm:colax_corr}
    Let $g \colon x \to y$ be a $1$-cell, $t$ be a monad on $x$, and $s$ be a $g$-determined monad on $y$. If $g_\# t$ exists, then right extending along $g$ gives a function
    \[\Colax_g(t,s) \to \Mnd_y(g_\# t, s)\]
    natural in $t$ and $s$. Moreover, it takes monic $2$-cells to monic $2$-cells.
\end{theorem}
\begin{proof}
    Let $\eps$ be the counit of $g_\#t$. Given $(g,\psi) \in \Colax_g(t,s)$, we have $\psi \colon gt \to sg$. Let $\widehat{\psi} \coloneqq \ran_g \psi \colon \ran_g gt \to \ran_g sg$. Since $s$ is $g$-determined, we may take the right extension in the codomain to be $(s,1_{sg})$, so that $\widehat{\psi}$ is the unique $2$-cell such that 
    \begin{equation}\label{eq:psi_hat}
        \widehat{\psi}g = \psi \cdot \eps.
    \end{equation}
    We check that $\widehat{\psi}$ is a monad map $g_\#t \to s$. We have
    \[(\widehat{\psi} \cdot \eta^{g_\#t})g \stackrel{\eqref{eq:psi_hat}}{=} \psi \cdot \eps \cdot \eta^{g_\#t}g \stackrel{\eqref{eq:monad_struct}}{=} \psi \cdot g\eta^t \stackrel{\eqref{eq:colax}}{=} \eta^s g,\]
    so $\widehat{\psi} \cdot \eta^{g_\#t} = \eta^s$, since they both have the same factorisation through the right extension $1_{sg}$. For the multiplication axiom we have
    \begin{align*}
        (\widehat{\psi} \cdot \mu^{g_\#t})g &= \psi \cdot \eps \cdot \mu^{g_\#t} g && \text{(by \eqref{eq:psi_hat})}\\
        &= \psi \cdot g \mu^t \cdot \eps t \cdot (g_\#t) \eps && \text{(by \eqref{eq:monad_struct})} \\
        &= \mu^s g \cdot s \psi \cdot \psi t \cdot \eps t \cdot (g_\#t) \eps && \text{(by \eqref{eq:colax})} \\
        &= \mu^s g \cdot s \psi \cdot \widehat{\psi}gt \cdot (g_\#t) \eps && \text{(by \eqref{eq:psi_hat})} \\
        &= \mu^s g \cdot \widehat{\psi} sg \cdot (g_\# t) \psi \cdot (g_\# t) \eps && \text{(by the interchange law)} \\
        &= \mu^s g \cdot \widehat{\psi} sg \cdot (g_\#t) \widehat{\psi} g && \text{(by \eqref{eq:psi_hat})} \\
        &= (\mu^s \cdot \widehat{\psi}s \cdot (g_\#t)\widehat{\psi}) g.
    \end{align*}
    For the same reason as before, this implies $\widehat{\psi} \cdot \mu^{g_\#t} = \mu^s \cdot \widehat{\psi}s \cdot (g_\#t)\widehat{\psi}$.

    As in Theorem~\ref{thm:univ}, naturality in $t$ is with respect to monad maps in $\Mnd_x$, and naturality in $s$ is with respect to monad maps in $\Mnd_y$ between $g$-determined monads. In both cases, it follows from the fact that $\ran_g$ is a (partial) functor, and that the monads on $y$ are $g$-determined.

    The preservation of monic $2$-cells is immediate from the fact that $\ran_g$ is a partial right adjoint.
\end{proof}

\begin{remark}\label{rmk:monic}
    If $\cat{K}(x,x)$ has kernel pairs (e.g.\ if $\cat{K} = \CAT$ and $x$ has kernel pairs), then a map of monads on $x$ is monic iff it is monic as a $2$-cell. This follows from the fact that the forgetful functor $\Mnd_x \to \cat{K}(x,x)$ creates limits, since a morphism is monic iff its kernel pair consists of identities.
\end{remark}

\begin{corollary}\label{cor:colax_univ}
    In the notation of Theorem~\ref{thm:colax_corr}, if the counit of $g_\#t$ is an isomorphism, then the function in that theorem is a bijection
    \[\Colax_g(t,s) \cong \Mnd_y(g_\#t, s).\]
\end{corollary}
\begin{proof}
    Let $\eps$ be the counit of $g_\#t$. It is easy to see from~\eqref{eq:lax} and~\eqref{eq:colax}, that for an invertible $2$-cell $\phi \colon sg \to gt$ we have $(g,\phi) \in \Lax(t,s)$ iff $(g,\phi^{-1}) \in \Colax(t,s)$. Then $(g,\eps^{-1}) \in \Colax_g(t,g_\#t)$, and an inverse to the function in Theorem~\ref{thm:colax_corr} is given by composing by $(g,\eps^{-1})$ on the right in the category $\Colax$.
\end{proof}

This corollary gives a second universal property of pushforwards whose counit is an isomorphism. In $\CAT$, this is the case as soon as $g$ is full and faithful and the right Kan extension is pointwise.

To finish off this section, we will take advantage of duality to produce analogous statements about monad lifts. Formally, we are replacing the $2$-category $\cat{K}$ with $\cat{K}^\opp$. This process only reverses the direction of the $1$-cells, so monads in $\cat{K}^\opp$ are the same thing as monads in $\cat{K}$. It does, however, swap the concepts of lax and colax transformations of monads.

\begin{definition}
    Let $g \colon x \to y$ be a $1$-cell, and $s$ be a monad on $y$. The \demph{monad lift} of $s$ along $g$ is the right lift of $sg$ along $g$, with its canonical monad structure. When it exists, we denote it by $g^\#s$.
\end{definition}

Monad lifts have been used by Shirazi~\cite[Thm.~4.5]{Shirazi2024} to give presentations of probability monads as codensity monads.

As with pushforwards, $g^\#$ is a functor $\Mnd_y \to \Mnd_x$ insofar as it is defined, which preserves those limits that $g^* \colon \cat{K}(y,y) \to \cat{K}(x,y)$ preserves. The dual of Theorem~\ref{thm:univ} shows that, given a $1$-cell $g \colon x \to y$, and monads $t$ on $x$ and $s$ on $y$, there is a natural bijection
\begin{equation}\label{eq:lift_univ}
    \Colax_g(t,s) \cong \Mnd_x(t,g^\# s)
\end{equation}
whenever $g^\# s$ exists. Moreover, the dual of Corollary~\ref{cor:colax_univ} shows that, if additionally $t$ is $g$-opdetermined and the counit of $g^\# s$ is an isomorphism, then there is a natural bijection
\begin{equation}\label{eq:lift_lax_univ}
    \Lax_g(t,s) \cong \Mnd_x(g^\# s, t).
\end{equation}

A moment's glance at these isomorphisms together with their duals immediately gives two adjunctions between two pairs of full subcategories of $\Mnd_x$ and $\Mnd_y$:
\begin{enumerate}[(i)]
    \item Theorem~\ref{thm:univ} and~\eqref{eq:lift_lax_univ} gives $g^\# \dashv g_\#$; and
    \item Corollary~\ref{cor:colax_univ} and~\eqref{eq:lift_univ} gives $g_\# \dashv g^\#$.
\end{enumerate}
The respective full subcategories of $\Mnd_x$ and $\Mnd_y$ are generated by the monads that satisfy the assumptions of the results in each case. These conditions do not appear very natural in a general $2$-category, however we will see in the next section that, at least in the case of the first adjunction, they are not rare when $\cat{K} = \CAT$.

\section{Pushforwards in \texorpdfstring{$\CAT$}{CAT}}
\label{sec:CAT}

In this section we study pushforward monads in the $2$-category $\CAT$ of locally small categories, functors and natural transformations. We give sufficient conditions for the existence of pushforwards, specialise and refine many of the results in the previous section, and show that codensity monads are invariant under limit completions.

\subsection{Existence}

In $\CAT$, extensions are usually called Kan extensions, and there are well-known formulas that compute them in terms of (co)limits in the codomain category. Let $F\colon \cat{A} \to \cat{X}$ and $G \colon \cat{A} \to \cat{B}$ be functors. If for each $b \in \cat{B}$ the (weighted) limit
\begin{equation}\label{eq:ptw_Ran}
\{\cat{B}(b,G-),F\} = \lim \left( b{\downarrow}G \xrightarrow{\Pi_b} \cat{A} \xrightarrow{F} \cat{X} \right)
\end{equation}
exists, where $\Pi_b$ is the forgetful functor, then the assignment $b \mapsto \lim F\Pi_b$ assembles into a functor $\cat{B} \to \cat{X}$, which is a (pointwise) right Kan extension of $F$ along $G$. The details of this can be found in Riehl's book~\cite[\S6]{Riehl2017}.

If $T$ is a monad on $\cat{A}$, then the pushforward $G_\#T$ is given by
\begin{equation}\label{eq:ptw_pf}
    G_\#T(b) = \lim \left( b{\downarrow}G \xrightarrow{\Pi_b} \cat{A} \xrightarrow{T} \cat{A} \xrightarrow{G} \cat{B} \right),
\end{equation}
when the right-hand side exists. If $\cat{A}$ is small, then so is $b{\downarrow}G$, so that the limit in~\eqref{eq:ptw_pf} is guaranteed to exist if $\cat{B}$ is complete. We can relax this condition by transferring the smallness hypothesis from $\cat{A}$ to $G$.

\begin{definition}
    A functor $P \colon \cat{A} \to \Set$ is \demph{small} if it is a small colimit of representables. A functor $G \colon \cat{A} \to \cat{B}$ is \demph{representably small} if $\cat{B}(b,G-) \colon \cat{A} \to \Set$ is small for all $b \in \cat{B}$.
\end{definition}

This terminology is taken from Day and Lack's work in~\cite{Day2007}, where they study limits in categories of small functors. Note that what we call representably small is what Avery and Leinster call corepresentably small in~\cite[Def.~4.6]{Avery2021}.

If a functor $P \colon \cat{A} \to \Set$ is representably small then it is small, since one can take $b$ to be the singleton in the definition. The reverse implication does not hold, however; see~\cite[Ex.~8.1]{Day2007} for a counterexample. The following proposition gives a useful equivalent condition for a functor to be small. We say a category is \demph{cofinally small} if it admits a cofinal functor from a small category.

\begin{proposition}[]\label{prop:small_iff}
    The following are equivalent for a functor $P \colon \cat{A} \to \Set$:
    \begin{enumerate}[(i)]
    \item \label{prop:si:small} $P$ is small;
    \item \label{prop:si:Lan} $P = \Lan_K H$ for some functors $K \colon \cat{B} \to \cat{A}$ and $H \colon \cat{B} \to \Set$ with $\cat{B}$ small;
    \item \label{prop:si:cofinal} the category of elements $\El(P)$ of $P$ is cofinally small.
    \end{enumerate}
\end{proposition}
\begin{proof}
    The equivalence between~\ref{prop:si:small} and~\ref{prop:si:Lan} is shown in a proposition of Kelly~\cite[Prop.~4.83]{Kelly1982}.
    
    Now assume~\ref{prop:si:Lan}, and let $\eta \colon H \to PK$ be the unit of the left Kan extension. Then $\eta$ induces a functor $\El(H) \to \El(PK)$, which we can compose with the obvious functor $\El(PK) \to \El(P)$ to get a functor $D \colon \El(H) \to \El(P)$. It suffices to show that $D$ is cofinal, i.e.\ that the comma category $D{\downarrow}(a,x)$ is (nonempty and) connected for every $a \in \cat{A}$ and $x \in Pa$. Since $\cat{B}$ is small and $\Set$ is cocomplete, the left Kan extension is pointwise, so the assignment
    \[\begin{tikzcd}
    Kb \ar[r, "f"] & a \ar[r, mapsto, shorten <>=15pt] &[2em] Hb \ar[r, "\eta_b"] & PKb \ar[r, "Pf"] & Pa
    \end{tikzcd}\]
    forms a colimiting cocone for $K{\downarrow}a \xrightarrow{\Pi_a} \cat{B} \xrightarrow{H} \Set$. Hence, there exists some $f\colon Kb \to a$ and $y \in Hb$ such that $(Pf \cdot \eta_b) (y) = x$, and so $f$ is a morphism $D(b,y) \to (a,x)$ in $\El(P)$. Moreover, for any other $f' \colon D(b',y') \to (a,x)$, there must be a zig-zag of morphisms in $\El(H\Pi_a)$ connecting $(b,y)$ and $(b',y')$, by the computation of colimits in $\Set$. The same zig-zag connects them in $D{\downarrow}x$, so this category is connected.

    Lastly, we show that~\ref{prop:si:cofinal} implies~\ref{prop:si:small}. One of the consequences of the Yoneda lemma is that $P$ is the colimit of the composite $\El(P)^\opp \to \cat{A}^\opp \to [\cat{A},\Set]$. If $\El(P)$ admits a cofinal functor from a small category, then this colimit is equivalent to a small colimit, showing that $P$ is small.
\end{proof}

\begin{remark}\label{rmk:sub_small}
  Despite what the name might suggest, a subfunctor of a small functor need not be small. Let $\cat{S}$ be a large discrete category, and $\cat{S}_0$ be the category resulting from freely adjoining an initial object, denoted by $0$, to $\cat{S}$. Then the functor $G \colon \cat{S}_0 \to \Set$ which is constant at the singleton set is small (in fact representable), but the subfunctor $F$ of $G$ which sends $0$ to $\emptyset$ and acts as $G$ otherwise is not small. This is easily seen, by the previous proposition, from the fact that $\El(F)$ is isomorphic to $\cat{S}$, which is clearly not cofinally small.
\end{remark}

\begin{examples}\label{ex:rep_small}
    \begin{enumerate}[(i)]
        \item[]
        \item A functor $\cat{A} \to \Set$ from an (essentially) small category is small by taking $K = 1_\cat{A}$ in Proposition~\ref{prop:small_iff}\ref{prop:si:Lan}. Consequently, any functor $\cat{A} \to \cat{B}$ is representably small.
        
        \item \label{ex:rs:adjoint} A right adjoint $G \colon \cat{A} \to \cat{B}$ is representably small, since $\cat{B}(b,G-) \cong \cat{A}(Fb,-)$ is already representable, where $F \dashv G$.
        
        \item \label{ex:rs:discrete} The full and faithful functor $D \colon \Set \to \Top$ that equips a set with the discrete topology is representably small. Indeed, given a topological space $X$, the full subcategory $\cat{S}$ of $X{\downarrow}D = \El(\Top(X,D-))$ on the surjections is cofinal. This follows easily from the facts that $\Top$ has an (epi, strong mono) factorisation system and that subspaces of discrete spaces are discrete. Note that $\cat{S}$ is small, since for any surjection $X \to DS$ we must have $\abs{S} \leq \abs{X}$.
        
        \item \label{ex:rs:field} The full and faithful forgetful functor $F \colon \Field \to \Ring$ is representably small. Given a ring $R$ and a field $k$, any homomorphism $f \colon R \to k$ factors through a unique residue field of $R$, namely $\Frac(R/\p)$, where $\p = \ker f$. Hence, $R{\downarrow}F$ has connected components in bijection with $\Spec R$, and each of those components has an initial object.
        
        \item \label{ex:rs:plus_E} The functor $(-)+E \colon \Set \to \Set$ for $E$ any set is representably small. Denote this functor by $P_E$. Given a set $X$, the connected components of $X{\downarrow}P_E$ are in bijection with the functions $f \colon X \to 1 + E$, and the component corresponding to $f$ has an initial object given by the function $X \to f^{-1}1 + E$ which acts as the identity on $f^{-1}1$ and as $f$ otherwise. 
        
        \item \label{ex:rs:ge_K} Let $K$ be a set, and $\Set_{\geq K}$ be the full subcategory of $\Set$ on the sets of cardinality at least that of $K$. The inclusion $i \colon \Set_{\geq K} \hookrightarrow \Set$ is representably small. Indeed, let $X$ be a set. If $\abs{X} \geq \abs{K}$, then $X{\downarrow}i$ has an initial object. Otherwise, by cardinal comparability, there exists an injection $m \colon X \to K$. We claim that the subcategory $\cat{S}$ of $X{\downarrow}i$ depicted by
        \[\begin{tikzcd}
        X \ar[d, "m"'] \ar[dr, "m'"]\\
        K \ar[r, shift left, "n_1"] \ar[r, shift right, "n_2"'] & K',
        \end{tikzcd}\]
        where $(n_1,n_2)$ is the cokernel pair of $m$, and $m' = n_1m = n_2m$, is cofinal. 
        
        First note that $n_1$ and $n_2$ are also injective, so $K' \in \Set_{\geq K}$. Now take any $f \colon X \to Y$ in $X{\downarrow}i$. It factors through $m$, either because $X = \varnothing$ or because $m$ splits, so $\cat{S}{\downarrow}f$ is nonempty. To see that it is connected, we illustrate the most complicated case: where $f$ factors as $gm'$ and $hm'$. The next diagram shows a zig-zag connecting $g$ and $h$ in $\cat{S}{\downarrow}f$:
        \[\begin{tikzcd}[column sep=large]
        & K' \ar[ddr, "g", bend left] \\
        & K \ar[u, "n_1"] \ar[d, "n_2"'] \\
        X \ar[uur, "m'", bend left] \ar[ur, "m"] \ar[r, "m'"] \ar[dr, "m"'] \ar[ddr, "m'"', bend right]
        & K' \ar[r, "\ang{hn_2, gn_1}"] & Y \\
        & K \ar[d, "n_2"'] \ar[u, "n_1"] \\
        & K' \ar[uur, "h"', bend right]
        \end{tikzcd}\]
        where $\ang{hn_2,gn_1}$ is the unique function $K' \to Y$ whose composites with $n_1$ and $n_2$ are $hn_2$ and $gn_1$, respectively.
    \end{enumerate}
\end{examples}

The importance of representably small functors in the theory of pushforward monads in $\CAT$ is clear from the next proposition.

\begin{proposition}
    Let $G \colon \cat{A} \to \cat{B}$ be a representably small functor into a complete category. Then $G_\# T$ exists for any monad $T$ on $\cat{A}$.
\end{proposition}
\begin{proof}
    It suffices to show that the right Kan extension along $G$ of any functor $F \colon \cat{A} \to \cat{X}$ with $\cat{X}$ complete exists. This follows from the limit formula~\eqref{eq:ptw_Ran}, since $\cat{X}$ is complete and $b{\downarrow}G = \El(\cat{B}(b,G-))$ is cofinally small by assumption.
\end{proof}

It follows that one can take pushforwards along each of the functors listed in Examples~\ref{ex:rep_small}. Representably small functors enjoy many convenient properties. For example, they are closed under composition, as shown by Avery and Leinster~\cite[Lem.~4.7]{Avery2021}. With this result, we can show that the pushforward construction can often be iterated.

\begin{proposition}\label{prop:iterate_pf}
    Let $G \colon \cat{A} \to \cat{B}$ be a representably small functor into a complete category, and $T$ be a monad on $\cat{A}$. If $\cat{A}^T$ has finite connected colimits, then the top functor in the square
    \[\begin{tikzcd}
    \cat{A}^T \ar[r, "G'"] \ar[d, "U^T"'] & \cat{B}^{G_\#T} \ar[d, "U^{G_\#T}"] \\
    \cat{A} \ar[r, "G"'] & \cat{B},
    \end{tikzcd}\]
    corresponding under Proposition~\ref{prop:lax_trans_EM_lift} to the counit of $G_\#T$, is also a representably small functor into a complete category.
\end{proposition}
\begin{proof}
    That $\cat{B}^{G_\#T}$ is complete follows from the fact that $U^{G_\#T}$ creates limits. By Example~\ref{ex:rep_small}\ref{ex:rs:adjoint} and the fact that the composite of two representably small functors is representably small, $GU^T = U^{G_\#T} G'$ is representably small, so $\cat{B}(b, U^{G_\#T}G'-) \cong \cat{B}^{G_\#T}(F^{G_\#T}b, G'-)$ is small for every $b \in \cat{B}$. We can now use the theory of limits in categories of small presheaves to show that $\cat{B}^{G_\#T}(b,G'-)$ is small for every $b \in \cat{B}^{G_\#T}$, not just the free ones.

    Let $[\cat{A}^T,\Set]_\rms$ denote the full subcategory of $[\cat{A}^T,\Set]$ on the small functors. Day and Lack's~\cite[Prop.~4.3]{Day2007} shows that $[\cat{A}^T,\Set]_\rms$ has all finite connected limits and, as the Yoneda embedding $(\cat{A}^T)^\opp \to [\cat{A}^T,\Set]$ factors through $[\cat{A}^T,\Set]_\rms$, it is easy to see that such limits are computed objectwise. Every $b \in \cat{B}^{G_\#T}$ is a reflexive coequaliser of free $G_\#T$-algebras, say of the pair of morphisms $(f,g)$. Since equalisers are finite connected limits, we can take the equaliser $E$ of $\cat{B}^{G_\#T}(f,G'-)$ and $\cat{B}^{G_\#T}(g,G'-)$ in $[\cat{A}^T,\Set]_\rms$, which satisfies
    \[Ea \cong \cat{B}^{G_\#T}(\mathrm{coeq}(f,g), G'a) \cong \cat{B}^{G_\#T}(b, G'a),\]
    naturally in $a \in \cat{A}^T$. Hence, $\cat{B}^{G_\#T}(b,G'-) \cong E$ is small.
\end{proof}

\begin{remark}\label{rmk:sub_rep_small}
  It seems that the hypothesis about the existence of certain colimits in $\cat{A}^T$ cannot be dropped if one wants to conclude that $G'$ is representably small in general. This is because a composite $GF$ being representably small does not imply that $F$ is representably small, even when $G$ is faithful. (A counterexample where $G$ is monadic has not been found.)

  For an example, let $\cat{S}_0$ be as in Remark~\ref{rmk:sub_small}, and let $\mathsf{2}$ denote the category with two objects, $a$ and $b$, and exactly one non-identity morphism $a \to b$. The functor $F \colon \cat{S}_0 \to \mathsf{2}$ which sends $0$ to $a$ and the rest of $\cat{S}$ to $b$ is not representably small, since $\cat{S}_0(b,F-)$ is not small by Remark~\ref{rmk:sub_small}. Now let $G$ be the unique functor $\mathsf{2} \to \mathsf{1}$, which is faithful. As $\mathsf{1}(*,GF-)$ is constant at the singleton (where we have written $*$ for the unique object of $\mathsf{1}$), $GF$ is representably small by the same remark.
\end{remark}

\begin{example}\label{ex:towers}
    Of course, another way of having $G'$ in Proposition~\ref{prop:iterate_pf} be representably small is simply having $\cat{A}$ small. Taking $T$ to be the identity monad, this gives a codensity version of monadic towers. These (or rather their comonadic dual) were introduced by Appelgate and Tierney in~\cite{Appelgate1970}, where they are used to decompose an adjunction into a reflection followed by a number of monadic adjunctions. In the codensity case, we need not start with an adjunction, a functor suffices; and instead of a reflection, the process ends once we reach a \demph{codense} functor, i.e.\ one whose codensity monad is the identity. Here are some examples (see Figure~\ref{fig:towers}):
    \begin{enumerate}[(i)]
        \item Let $2 \colon \mathsf{1} \to \Set$ be the functor which picks out a two-element set. From Example~\ref{ex:pushforward}\ref{ex:pf:terminal}, it follows that the codensity monad of $2$ is the double-powerset monad. It is a classical result that its category of algebras is $\Set^\opp$, which is equivalent to the category of complete atomic Boolean algebras by Stone duality. The functor $2' \colon \mathsf{1} \to \Set^\opp$ picks out $1$, because its composite with $\mathscr{P} \colon \Set^\opp \to \Set$ is $2$. Since $1$ is a dense generator of $\Set$, the functor $2'$ is codense.
        \item Let $i \colon \fdVect \to \Vect$ be the inclusion of the full subcategory of finite dimensional vector spaces over a fixed field. As proved by Leinster~\cite[\S7]{Leinster2013}, its codensity monad is the double dualisation monad, whose category of algebras is equivalent to $\Vect^\opp$. The functor $i' \colon \fdVect \to \Vect^\opp$ is the composite of $i^\opp$ and the equivalence $(-)^* \colon \fdVect \to \fdVect^\opp$ given by the fact that finite dimensional vector spaces are self-dual. Now $i$ is dense, because it is the inclusion of the category of finitely presentable objects, so $i'$ is codense.
        \item \label{ex:towers:Stone} Let $i \colon \FinSet \to \Set$ be the inclusion of the full subcategory of finite sets. Its codensity monad is the ultrafilter monad on $\Set$, whose category of algebras is $\CHaus$. The functor $i' \colon \FinSet \to \CHaus$ equips a finite set with its unique compact Hausdorff topology: the discrete topology. Its codensity monad $i'_\#1$ sends a space $X$ to the set of ultrafilters on its Boolean algebra of clopen subsets. The category of $i'_\#1$-algebras is equivalent to $\Stone$, the category of Stone spaces. This follows from the argument outlined by Sipo\textcommabelow{s} in~\cite[\S5]{Sipos2018}:

        Note that $i'$ factors as $\FinSet \xrightarrow{j} \Stone \xrightarrow{U} \CHaus$. Under Stone duality, $j^\opp$ corresponds to the inclusion of the category of finite Boolean algebras into the category of all Boolean algebras. The former category contains all of the finitely generated free Boolean algebras. Since Boolean algebras are the algebras of a finitary algebraic theory, it follows that $j^\opp$ is dense, and so $j$ is codense. By Remark~\ref{rmk:pf_preserve}, since $U$ is a right adjoint, we have
        \[i'_\#1 = (Uj)_\#1 = U_\# (j_\#1) = U_\# 1.\]
        But $U$ is already monadic (being the inclusion of a reflective subcategory), so the category of algebras of $U_\#1$ is $\Stone$. It also follows that $i'' = j$, which is codense, so the monadic tower stabilises after two steps.
    \end{enumerate}
    \renewcommand\thesubfigure{\roman{subfigure}}
    \begin{figure}
        \centering
        \begin{subfigure}[b]{0.3\textwidth}
        \centering
        \begin{tikzcd}
        & \Set^\opp \ar[d, "\mathscr{P}"] \\
        \mathsf{1} \ar[r, "{2}"'] \ar[ur, "{2'}"] & \Set
        \end{tikzcd}
        \caption{}
        \end{subfigure}
        \begin{subfigure}[b]{0.3\textwidth}
        \centering
        \begin{tikzcd}
        & \Vect^\opp \ar[d, "(-)^*"] \\
        \fdVect \ar[r, "i"'] \ar[ur, "i'"] & \Vect
        \end{tikzcd}
        \caption{}
        \end{subfigure}
        \begin{subfigure}[b]{0.3\textwidth}
        \centering
        \begin{tikzcd}
        & \Stone \ar[d, "U"] \\
        & \CHaus \ar[d, "V"] \\
        \FinSet \ar[r, "i"'] \ar[ur, "i'"] \ar[uur, bend left, "i''"] & \Set
        \end{tikzcd}
        \caption{}
        \end{subfigure}
        \caption{Three examples of codensity monadic towers.}
        \label{fig:towers}
    \end{figure}
\end{example}

\subsection{An adjunction between categories of monads}

We now revisit the adjunctions found at the end of Section~\ref{sec:general}. One of them takes a particularly nice form when one pushes forwards along a full and faithful functor (see Theorem~\ref{thm:mnd_adj}).

Analogously to extensions, we write $\Rift_G F$ and $\Lift_G F$ for the right and left Kan lifts of $F$ through $G$, as in the next lemma.

\begin{lemma}\label{lem:ff_rift}
    Let $\theta$ be a natural isomorphism fitting into a diagram
    \[\begin{tikzcd}[column sep=small]
    & \cat{B} \ar[dr, "G"] \\
    \cat{A} \ar[ur, "H"] \ar[rr, "F"', ""{name=F}] && \cat{C}
    \ar[from=F, to=1-2, Rightarrow, "\theta", shorten >=7pt, shorten <=3pt, pos=0.32]
    \end{tikzcd}\]
    where $G$ is full and faithful. Then $\Rift_G F = (H,\theta)$ and $\Lift_G F = (H,\theta^{-1})$.
\end{lemma}
\begin{proof}
    We prove the statement about the right Kan lift; the other is dual. Note that since $G$ is full and faithful, so is $G_* \colon [\cat{A},\cat{B}] \to [\cat{A},\cat{C}]$. Given any $\alpha \colon F \to GH'$, it follows that there is a unique $\widehat{\alpha} : H \to H'$ such that $G\widehat{\alpha} = \alpha \cdot \theta^{-1}$. This is exactly what it means for $(H,\theta)$ to be the right Kan lift of $F$ through $G$.
\end{proof}

We aim to use the dual of Corollary~\ref{cor:colax_univ}, as given in~\eqref{eq:lift_lax_univ}. Lemma~\ref{lem:ff_rift} allows us to simplify the conditions on the corresponding monads. Let $G \colon \cat{A} \to \cat{B}$ be full and faithful. This lemma, together with the dual of Lemma~\ref{lem:g-det_iff}, implies that any monad on $\cat{A}$ is $G$-opdetermined (see Definition~\ref{def:g-det}). Moreover, a monad $S$ on $\cat{B}$ such that $G^\#S$ exists and has an isomorphism as counit is precisely one that \demph{restricts along $G$}, i.e.\ such that there exist an endofunctor $S'$ of $\cat{A}$ (which inherits a monad structure) and an isomorphism $GS' \cong SG$, since Lemma~\ref{lem:ff_rift} then implies that $G^\#S = S'$.

\begin{examples}\label{ex:mnd_restrict}
    \begin{enumerate}[(i)]
        \item[]
        \item Clearly, the identity monad restricts along any full and faithful functor.
        \item The following monads on $\Set$ restrict along the inclusion $\FinSet \hookrightarrow \Set$: the filter and ultrafilter monads; the $(-) + E$ monad, for a finite set $E$; the $M \times (-)$ monad, for a finite monoid $M$; the powerset monad; and the endomorphism monad of any finite set (see Example~\ref{ex:pushforward}\ref{ex:pf:terminal}), such as the double-powerset monad.
    \end{enumerate}
\end{examples}

\begin{theorem}\label{thm:mnd_adj}
Let $G \colon \cat{A} \to \cat{B}$ be a full and faithful representably small functor into a complete category. There is an adjunction
\[\begin{tikzcd}
\Mnd_\cat{A} \ar[r, shift left=2, "G_\#", ""'{name=RA}] & \Mnd_\cat{B}^{\mathrm{res}G} \ar[l, shift left=2, "G^\#", ""'{name=LA}]
\ar[from=LA, to=RA, phantom, "\dashv", sloped]
\end{tikzcd}\]
where $\Mnd_\cat{B}^{\mathrm{res}G}$ is the full subcategory of $\Mnd_\cat{B}$ on the monads that restrict along $G$. Moreover, the right adjoint $G_\#$ is full and faithful.
\end{theorem}
\begin{proof}
    Since $G$ is representably small and $\cat{B}$ is complete, $G_\#$ is defined on all $\Mnd_\cat{A}$. Since $G$ is full and faithful, the counit of a right Kan extension along it is an isomorphism, showing that $G_\#$ has the right codomain. Similarly, Lemma~\ref{lem:ff_rift} ensures that $G^\#$ is defined on all $\Mnd_\cat{B}^{\mathrm{res}G}$. By Theorem~\ref{thm:univ} and the dual of Theorem~\ref{thm:colax_corr}, there is a bijection
    \[\Mnd_\cat{A}(G^\#S,T) \cong \Lax_G(T,S) \cong \Mnd_\cat{B}(S,G_\#T)\]
    natural in $S \in \Mnd_\cat{B}^{\mathrm{res}G}$ and $T \in \Mnd_\cat{A}$.

    As $G$ is full and faithful, so are $\Ran_G$ and $G_*$, and hence so is $G_\#$. This can also be easily seen from the fact that the counit of the adjunction is an isomorphism. 
\end{proof}

\begin{example}
    The only endofunctor of $\CHaus$ which fixes underlying sets is the identity. Such an endofunctor amounts to an endomorphism of the ultrafilter monad on $\Set$, by the correspondence between monad maps and functors between Eilenberg--Moore categories. Since $i_\# \colon \Mnd_\FinSet \to \Mnd_\Set$ is full and faithful, this gives and endomorphism of the identity monad on $\FinSet$, which can only be the identity.
\end{example}

\begin{example}
    The adjunction of Theorem~\ref{thm:mnd_adj} takes a particularly simple form when we take $G$ to be $i \colon \Set_{\geq K} \hookrightarrow \Set$ from Example~\ref{ex:rep_small}\ref{ex:rs:ge_K}. Most monads on $\Set$ have a monic unit; these are called the \demph{consistent monads}. The only two inconsistent monads are the one that is constant at $1$, and that which is constant at $1$ except that it sends $\varnothing$ to $\varnothing$ (see~\cite[Lem.~IV.3]{Adamek2012}).
    
    Every consistent monad $T$ restricts along $i$. Moreover, for every set $X$ such that $TX$ is nonempty, we have $(i_\# i^\# T)X \cong TX$. This is clear if $\abs{X} \geq \abs{K}$. Otherwise, Example~\ref{ex:rep_small}\ref{ex:rs:ge_K} shows that $(i_\# i^\# T)X$ is the equaliser of
    \[\begin{tikzcd}
    TK \ar[r, shift left, "Tn_1"] \ar[r, shift right, "Tn_2"'] & TK',
    \end{tikzcd}\]
    where $(n_1,n_2)$ is the cokernel pair of an injection $m \colon X \to K$. Since $TX$ is nonempty, there is a retraction $r$ of $Tm$. We also have the composite
    \[\begin{tikzcd}
    s = TK' \ar[r, "{T\ang{\eta^T_K, Tm \cdot r \cdot \eta^T_K}}"] &[5em] T^2K \ar[r, "\mu^T_K"] & TK,
    \end{tikzcd}\]
    where $\ang{f_1,f_2}$ denotes the unique morphism such that $\ang{f_1,f_2} \cdot n_i = f_i$ for $i \in \{1,2\}$, for $f_1$ and $f_2$ such that $f_1m = f_2m$. One checks that $r$ and $s$, together with $Tm$, $Tn_1$ and $Tn_2$ form a split equaliser diagram, showing that $(i_\#i^\#T)X \cong TX$.  

    If we take $K = 1$, then even the inconsistent monads restrict along $i$, giving a reflection
    \begin{equation}\label{eq:mnd_refl}
    \begin{tikzcd}
    \Mnd_{\Set_{\geq 1}} \ar[r, hook, shift left=2, "i_\#", ""'{name=RA}] & \Mnd_\Set. \ar[l, shift left=2, "i^\#", ""'{name=LA}]
    \ar[from=LA, to=RA, phantom, "\dashv", sloped]
    \end{tikzcd}
    \end{equation}

    Let $a$ and $b$ be the two functions $1 \to 2$. Given a monad $T$ on $\Set$, the equaliser of the pair $(Ta,Tb)$ is the set of \demph{pseudoconstants} of $T$. A pseudoconstant can be understood as a unary term in the theory of $T$ which takes a constant value in every nonempty algebra. Every constant induces a pseudoconstant; an example of a pseudoconstant that does not come from a constant is the unique element of any nonempty model of the theory of sets where all elements are equal. 
    
    The monad on $\Mnd_\Set$ induced by the reflection~\eqref{eq:mnd_refl} sends a monad $T$ to $i_\#i^\#T$, which agrees with $T$ on all nonempty sets and which sends $\varnothing$ to the set of pseudoconstants of $T$. In other words, $i_\#i^\#$ realises the pseudoconstants of a monad as actual constants. The argument above implies the well-known fact that as soon as there is at least one constant, all pseudoconstants are induced by constants.
\end{example}

As outlined at the end of Subsection~\ref{ssec:g-det}, there is also a dual partial adjunction (notice the interchange between the left and right adjoints):
\[\begin{tikzcd}
\Mnd_\cat{A} \ar[r, shift left=2, "G_\#", ""'{name=LA}] & \Mnd_\cat{B}^{G\mathrm{det}} \ar[l, shift left=2, "G^\#", ""'{name=RA}]
\ar[from=LA, to=RA, phantom, "\dashv", sloped]
\end{tikzcd}\]
where $\Mnd_\cat{B}^{G\mathrm{det}}$ is the full subcategory of $\Mnd_\cat{B}$ on the $G$-determined monads. In this case, $G^\#$ need not be defined on all of $\Mnd_\cat{B}^{G\mathrm{det}}$. A simple sufficient condition to make this adjunction total is that $G$ be a full and faithful left adjoint (such as the discrete-topology functor $\Set \to \Top$), since then right Kan lifts through $G$ always exist.

\begin{examples}\label{ex:G-det}
    \begin{enumerate}[(i)]
        \item[] 
        \item If $G \colon \cat{A} \to \cat{B}$ is full and faithful, then any right Kan extension along $G$ is $G$-determined by Lemma~\ref{lem:g-det_iff}. For example, the endomorphism monad on $\Set$ of a finite set $X$ is $i$-determined, where $i \colon \FinSet \to \Set$. This follows from Example~\ref{ex:pushforward}\ref{ex:pf:terminal}, because it is the codensity monad of $\mathsf{1} \xrightarrow{X} \FinSet \xrightarrow{i} \Set$, and hence is given by $\Ran_{iX} iX = \Ran_i (\Ran_X iX)$. 
        
        In particular, the double-powerset monad on $\Set$ is $i$-determined. This is in stark contrast to the powerset monad $\PS$ on $\Set$, which is not $i$-determined. If it were, then $\PS = \Ran_i \PS i$, but we will see in Section~\ref{sec:finset} that the right-hand side is the filter monad.
        \item Let $i \colon \cat{A} \hookrightarrow \cat{B}$ be the inclusion of a full subcategory. By definition, a monad on $\cat{B}$ is $i$-determined precisely when it has the \textit{limit property} of Ad\'amek and Sousa's~\cite[Def.~6.2]{Adamek2021}. They give a series of examples of double dualisation monads on complete symmetric monoidal categories $\cat{C}$ which are $i$-determined for $i \colon \cat{C}_\mathsf{fp} \hookrightarrow \cat{C}$ the inclusion of the finitely presentable objects. 
        
        The main idea of Theorem~6.5 of~\cite{Adamek2021} can be generalised easily using our theory of pushforwards as the following statement: Let $G \colon \cat{A} \to \cat{B}$ be a functor such that $G_\#1$ exists. If $T$ is a $G$-determined monad on $\cat{B}$ such that $\eta^T G$ is a a monic natural transformation, then $G_\#1$ is the smallest $G$-determined submonad of $T$.

        This follows from the fact that for any monad $T$ on $\cat{B}$, the pair $(G, \eta^T G)$ is a colax transformation $1_\cat{A} \to T$. If $T$ is $G$-determined and $\eta^T G$ is monic, then Theorem~\ref{thm:colax_corr} gives a monic monad map $\widehat{\eta^T G} \colon G_\#1 \to T$. If $\theta \colon S \to T$ is a $G$-determined submonad, then the naturality in $T$ of this construction ensures that $\theta \cdot \widehat{\eta^S G} = \widehat{\eta^T G}$.
    \end{enumerate}
\end{examples}

\subsection{Limit completions and codensity}\label{ssec:codense}

We finish this section with the observation that codensity monads are invariant under limit completions, in a suitable sense. This allows us to relate codensity monads to Diers's multiadjunctions~\cite[p.~58]{Diers1981} and Tholen's $\mathfrak{D}$-pro-adjunctions~\cite[p.~148]{Tholen1984}.

Let $G \colon \cat{A} \to \cat{B}$ be a functor with $\cat{A}$ essentially small and $\cat{B}$ complete. We get a usual nerve-realisation-type adjunction:
\[\begin{tikzcd}[column sep=small]
& \cat{A} \ar[ddl, "y"', end anchor=north, hook'] \ar[ddr, "G", end anchor=north] &[1.5em] \\ \\
{[\cat{A}, \Set]^\opp} \ar[rr, shift right=2, ""{below, name=R}, "\Ran_y G"'] && \cat{B} \ar[ll, shift right=2, ""{above, name=L}, "\Ran_G y"']
\ar[phantom, sloped, "\dashv", from=L, to=R]
\end{tikzcd}\]
If we think of $[\cat{A}, \Set]^\opp$ as the free limit completion of $\cat{A}$, then the right adjoint realises the formal limit of a small diagram $D \colon \cat{I} \to \cat{A}$ as the actual limit in $\cat{B}$ of $GD$. As Leinster explains in~\cite[\S 2]{Leinster2013}, the monad on $\cat{B}$ that this adjunction induces is precisely the codensity monad of $G$. A key aspect in this situation is that the Yoneda embedding $y$ is codense.

Recall that a functor is codense when its codensity monad is the identity. Codense functors play an important role in relating the codensity monads of different functors, as shown by the next proposition.

\begin{proposition}\label{prop:codense_preserve_codensity}
    Let $G \colon \cat{A} \to \cat{B}$ and $H \colon \cat{X} \to \cat{A}$ be functors with $H$ codense. If $G$ is a right adjoint, then $G$ and $GH$ have the same codensity monad.
\end{proposition}
\begin{proof}
    We have
    \[(GH)_\# 1 \cong G_\# (H_\# 1) \cong G_\# 1,\]
    where the first isomorphism is that of Lemma~\ref{lem:pf_preserve}, since $G$ is a right adjoint, and the second is the fact that $H$ is codense.
\end{proof}

\begin{remark}
    The condition that $G$ be a right adjoint cannot be dropped. The following example is due to Kelly~\cite[\S5.2]{Kelly1982}: the functors $1 \colon \mathsf{1} \to \FinSet$ and $y \colon \FinSet \to [\FinSet^\opp, \Set]$ are both dense, but their composite is not. It follows that the codensity monads of $y^\opp$ and of $y^\opp 1^\opp$ are different. 
\end{remark}

The next lemma is a rich source of codense functors.

\begin{lemma}[{Kelly~\cite[Thm.~5.13]{Kelly1982}}]\label{lem:codense_functor}
    Let \begin{tikzcd}[cramped, sep=scriptsize]
        \cat{A} \ar[r, "F"] & \cat{B} \ar[r, "G"] & \cat{C}
    \end{tikzcd} be functors, with $G$ full and faithful. If $GF$ is codense, then so are $F$ and $G$.
\end{lemma}

For any category $\cat{A}$, the category $[\cat{A},\Set]^\opp_\rms$ is its free small-limit completion, and the Yoneda embedding $y \colon \cat{A} \to [\cat{A},\Set]^\opp_\rms$ is codense. It follows that the inclusion of $\cat{A}$ into any full subcategory of $[\cat{A},\Set]^\opp_\rms$ containing the representables is codense. Such subcategories can be thought of as the categories obtained from $\cat{A}$ by freely adjoining limits for a chosen class of diagrams.

These facts together with Proposition~\ref{prop:codense_preserve_codensity} allow us to relate codensity monads to the multiadjunctions of Diers~\cite{Diers1980, Diers1981} and, more generally, the pro-adjunctions of Tholen~\cite{Tholen1984}. We summarise the situation now.

Let $\mathfrak{D}$ be a class of small categories containing the terminal category $\mathsf{1}$. Let $\Pro(\mathfrak{D}, \cat{A})$ be the full subcategory of $[\cat{A},\Set]^\opp_\rms$ on those objects that are $\cat{I}^\opp$-indexed limits of representables, for $\cat{I} \in \mathfrak{D}$. Since $\mathsf{1} \in \mathfrak{D}$, the Yoneda embedding factors as a fully faithful functor $\cat{A} \hookrightarrow \Pro(\mathfrak{D}, \cat{A})$. If $\cat{A}$ is \textbf{$\mathfrak{D}$-complete}, i.e.\ it has $\cat{I}^\opp$-indexed limits for all $\cat{I} \in \frak{D}$, then this inclusion has a right adjoint given by taking the corresponding limit in $\cat{A}$. Any functor $G \colon \cat{A} \to \cat{B}$ induces a functor $\Pro(\mathfrak{D}, G)$ making the diagram
\[\begin{tikzcd}
\cat{A} \ar[r, "G"] \ar[d, hook] &[2em] \cat{B} \ar[d, hook] \\
\Pro(\frak{D}, \cat{A}) \ar[r, "\Pro{(\frak{D}, G)}"] & \Pro(\frak{D}, \cat{B})
\end{tikzcd}\]
commute. 

\begin{definition}[{Tholen~\cite{Tholen1984}}]\label{def:pro_adj_mon}
    A functor $G \colon \cat{A} \to \cat{B}$ is
    \begin{enumerate}[(i)]
        \item a \textbf{right $\mathfrak{D}$-pro-adjoint} if $\Pro(\frak{D}, G)$ is a right adjoint;
        \item \textbf{$\frak{D}$-pro-monadic} if $\Pro(\frak{D},G)$ is monadic.
    \end{enumerate}
    Let $\frak{S}$ be the class of all sets (small discrete categories). Then $G$ is a \textbf{right multiadjoint} if it is a right $\frak{S}$-pro-adjoint, and it is \textbf{multimonadic} if it is $\frak{S}$-pro-monadic.
\end{definition}

\begin{proposition}\label{prop:codensity_pro_adjoint}
    If $G \colon \cat{A} \to \cat{B}$ is a right $\frak{D}$-pro-adjoint and $\cat{B}$ is $\frak{D}$-complete, then $G$ has a codensity monad and it is the monad induced by the right adjoint
    \begin{equation}\label{eq:pro_adjoint}
        \begin{tikzcd}
            \Pro(\frak{D}, \cat{A}) \ar[r, "\Pro{(\frak{D}, G)}"] &[2em] \Pro(\frak{D}, \cat{B}) \ar[r] & \cat{B}.
        \end{tikzcd}
    \end{equation}
\end{proposition}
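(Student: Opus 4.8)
The plan is to reduce the statement to Proposition~\ref{prop:codense_right_adj}, which is tailor-made for this situation. The right adjoint in \eqref{eq:pro_adjoint} factors as $GH$ where $H \colon \cat{B} \to \Pro(\frak{D}, \cat{B})$ is... no, wait — let me set it up the other way. Write $R \colon \Pro(\frak{D},\cat{B}) \to \cat{B}$ for the right adjoint to the Yoneda inclusion $y_{\cat{B}} \colon \cat{B} \hookrightarrow \Pro(\frak{D},\cat{B})$, which exists because $\cat{B}$ is $\frak{D}$-complete; this $R$ takes a formal $\cat{J}^{\opp}$-indexed limit of representables to its actual value in $\cat{B}$. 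By hypothesis $\Pro(\frak{D},G) \colon \Pro(\frak{D},\cat{A}) \to \Pro(\frak{D},\cat{B})$ is a right adjoint, so the composite $R \circ \Pro(\frak{D},G)$ in \eqref{eq:pro_adjoint} is a right adjoint. Using the commuting square relating $G$ to $\Pro(\frak{D},G)$ via the two Yoneda embeddings, this composite equals $R \circ \Pro(\frak{D},G) = G \circ R_{\cat{A}}'$ only if $\cat{A}$ is $\frak{D}$-complete, which we are \emph{not} assuming — so I should instead write it as $R \circ \Pro(\frak{D},G) \circ y_{\cat{A}}$ precomposed suitably. The clean statement is: $R \circ \Pro(\frak{D},G)$ is a right adjoint, and precomposing it with the Yoneda embedding $y_{\cat{A}} \colon \cat{A} \hookrightarrow \Pro(\frak{D},\cat{A})$ gives $G$ (by the commuting square, $\Pro(\frak{D},G) \circ y_{\cat{A}} = y_{\cat{B}} \circ G$, and $R \circ y_{\cat{B}} = 1_{\cat{B}}$ since $R$ is the right adjoint to a fully faithful left adjoint).

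So the structure is: $G = (R \circ \Pro(\frak{D},G)) \circ y_{\cat{A}}$, where $R \circ \Pro(\frak{D},G)$ is a right adjoint and $y_{\cat{A}}$ is codense. Then Proposition~\ref{prop:codense_right_adj} applies directly: the codensity monad of $G$ exists and equals the monad induced by the right adjoint $R \circ \Pro(\frak{D},G)$ together with its left adjoint. This is exactly the monad induced by the functor in \eqref{eq:pro_adjoint}, since that functor \emph{is} $R \circ \Pro(\frak{D},G)$.

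The remaining things to verify are the two ``folklore'' facts I invoked. First, that $y_{\cat{A}} \colon \cat{A} \hookrightarrow \Pro(\frak{D},\cat{A})$ is codense — this was already asserted in the text just before the proposition (the Yoneda embedding into $\widecheck{\cat{A}}$ is codense, hence so are its factorisations through any intermediate full subcategory containing the representables, by Lemma~\ref{lem:codense_functor}), so I can cite it. Second, that when $\cat{B}$ is $\frak{D}$-complete the inclusion $y_{\cat{B}}$ has a right adjoint $R$ with $R \circ y_{\cat{B}} \cong 1_{\cat{B}}$ — this is also stated in the text (``this inclusion has a right adjoint given by taking the corresponding limit in $\cat{B}$''), and the triangle identity / full-faithfulness of $y_{\cat{B}}$ gives $R y_{\cat{B}} \cong 1$. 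With these in hand the proof is a two-line application of Proposition~\ref{prop:codense_right_adj}.

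The main obstacle, such as it is, is purely bookkeeping: making sure that ``the monad induced by $G$ and its left adjoint'' in the conclusion of Proposition~\ref{prop:codense_right_adj} is correctly identified with ``the monad induced by the functor \eqref{eq:pro_adjoint}'' — i.e.\ that the right adjoint produced by the abstract argument is genuinely the concrete composite $\Pro(\frak{D},\cat{A}) \to \Pro(\frak{D},\cat{B}) \to \cat{B}$ and not some other adjoint. This is immediate once one observes that $R \circ \Pro(\frak{D},G)$ is literally that composite, but it is worth spelling out. A secondary subtlety is that Proposition~\ref{prop:codense_right_adj} as stated requires the right adjoint's codomain to be such that everything is well-defined; here the codomain is $\cat{B}$, no size issue arises, and the composite of a right adjoint ($\Pro(\frak{D},G)$) with a right adjoint ($R$) is a right adjoint, so the hypothesis ``$R \circ \Pro(\frak{D},G)$ is a right adjoint'' is satisfied. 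I would therefore write the proof as: exhibit the factorisation $G = (R\,\Pro(\frak{D},G))\,y_{\cat{A}}$, note $y_{\cat{A}}$ is codense and $R\,\Pro(\frak{D},G)$ is a right adjoint, apply Proposition~\ref{prop:codense_right_adj}, and read off that the induced monad is that of \eqref{eq:pro_adjoint}.

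\begin{proof}
    Write $y_{\cat{A}} \colon \cat{A} \hookrightarrow \Pro(\frak{D}, \cat{A})$ and $y_{\cat{B}} \colon \cat{B} \hookrightarrow \Pro(\frak{D}, \cat{B})$ for the (fully faithful) Yoneda embeddings. Since $\cat{B}$ is $\frak{D}$-complete, $y_{\cat{B}}$ has a right adjoint $R \colon \Pro(\frak{D}, \cat{B}) \to \cat{B}$, given by computing the relevant limit in $\cat{B}$; as $y_{\cat{B}}$ is fully faithful, $R y_{\cat{B}} \cong 1_{\cat{B}}$. The functor in \eqref{eq:pro_adjoint} is precisely the composite $R \circ \Pro(\frak{D}, G)$, and it is a right adjoint, being a composite of the right adjoint $\Pro(\frak{D}, G)$ with the right adjoint $R$.

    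From the commuting square relating $G$ and $\Pro(\frak{D}, G)$ we have $\Pro(\frak{D}, G) \circ y_{\cat{A}} = y_{\cat{B}} \circ G$, and hence
    \[
        \bigl(R \circ \Pro(\frak{D}, G)\bigr) \circ y_{\cat{A}} = R \circ y_{\cat{B}} \circ G \cong G.
    \]
    Thus $G$ factors, up to isomorphism, as a right adjoint precomposed with the Yoneda embedding $y_{\cat{A}}$. As noted above, $y_{\cat{A}}$ is codense: the Yoneda embedding $\cat{A} \hookrightarrow \widecheck{\cat{A}}$ is codense, and $\Pro(\frak{D}, \cat{A})$ is a full subcategory of $\widecheck{\cat{A}}$ containing the representables, so $y_{\cat{A}}$ is codense by Lemma~\ref{lem:codense_functor}.

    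Proposition~\ref{prop:codense_right_adj} now applies with the right adjoint $R \circ \Pro(\frak{D}, G)$ in the role of $G$ there and $y_{\cat{A}}$ in the role of the codense functor $H$: the codensity monad of $G$ exists and is the monad induced by $R \circ \Pro(\frak{D}, G)$ together with its left adjoint, which is exactly the monad induced by the functor \eqref{eq:pro_adjoint}.
\end{proof}
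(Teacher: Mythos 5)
Your proof is correct and takes exactly the same route as the paper: the paper's own proof also observes that $\cat{A} \hookrightarrow \Pro(\frak{D},\cat{A})$ is codense via Lemma~\ref{lem:codense_functor} and then applies Proposition~\ref{prop:codense_right_adj} to the right adjoint~\eqref{eq:pro_adjoint}. You merely spell out the bookkeeping (the commuting square and $R\,y_{\cat{B}} \cong 1_{\cat{B}}$) that the paper leaves implicit.
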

\begin{proof}
    Since $\Pro(\mathfrak{D}, \cat{A}) \hookrightarrow [\cat{A},\Set]^\opp_\rms$ is fully faithful, the inclusion $\cat{A} \hookrightarrow \Pro(\mathfrak{D}, \cat{A})$ is codense by Lemma~\ref{lem:codense_functor}. The result then follows from Proposition~\ref{prop:codense_preserve_codensity}.
\end{proof}

The case of multiadjoints and multimonadic functors was studied by Diers~\cite{Diers1980,Diers1981a, Diers1981}. In this case, the category $\Pro(\frak{S}, \cat{A})$ is the free product completion of $\cat{A}$, which we denote by $\Prod(\cat{A})$.

\begin{lemma}[Tholen]\label{lem:multiadjoint}
    A functor $G \colon \cat{A} \to \cat{B}$ is a right multiadjoint iff, for each $b \in \cat{B}$, the category $b{\downarrow}G$ has a set of connected components and each component has an initial object.
\end{lemma}
\begin{proof}
    The second condition is the original definition of right multiadjoint given by Diers~\cite[p.~58]{Diers1981}. The equivalence with the definition here was proved by Tholen in~\cite[Thm.~2.4]{Tholen1984}.
\end{proof}

There are analogues of the monadicity theorems for multimonadic functors, many of which can be found in Diers~\cite[\S 3 and~4]{Diers1980}. However, there is a sufficient condition for multimonadicity that is in practice easy to check.

\begin{definition}\label{def:rel_ff}
    A functor $G \colon \cat{A} \to \cat{B}$ is \textbf{relatively full and faithful} if every morphism of $\cat{A}$ is $G$-cartesian, i.e.\ for any pair of morphisms $f \colon X \to Z$ and $g \colon Y \to Z$ in $\cat{A}$ with the same codomain, and any $m \colon GX \to GY$ in $\cat{B}$ such that $Gg \cdot m = Gf$, there exists a unique $h \colon X \to Y$ such that $hg = f$ and $Gh = m$.
\end{definition}

This conditions may be summarised by the following diagram.
\[\begin{tikzcd}[row sep=tiny, column sep=scriptsize]
X \ar[dd, "\exists! h"'] \ar[dr, near start, "\forall f"] & &[-2em] & &[-2em] GX \ar[dd, "\forall m"'] \ar[dr, near start, "Gf"] \\
& Z & \phantom{1} \ar[r, mapsto, "G"] & \phantom{1} & & GZ \\
Y \ar[ur, near start, "\forall g"'] & & & & GY \ar[ur, near start, "Gg"']
\end{tikzcd}\]

The next proposition is part of Diers~\cite[Prop.~6.0]{Diers1980}.

\begin{proposition}[Diers]\label{prop:multiadjoint_rel_ff}
    A functor that is both a right multiadjoint and relatively full and faithful is multimonadic.
\end{proposition}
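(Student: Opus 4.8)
The plan is to use the multimonadicity theorem for $\mathfrak{D}$-pro-monadic functors (of which multimonadic is the case $\mathfrak{D} = \mathfrak{S}$), which typically requires checking that $\Pro(\mathfrak{D}, G)$ is a right adjoint and that it satisfies the conditions of a monadicity theorem — most conveniently, that it creates coequalizers of $\Pro(\mathfrak{D}, G)$-split pairs, or the crude monadicity version creating absolute coequalizers. So first I would observe that, by Lemma~\ref{lem:multiadjoint}, the hypothesis that $G$ is a right multiadjoint is equivalent to $\Prod(G) = \Pro(\mathfrak{S}, G)$ being a right adjoint. Thus we are left to prove that $\Prod(G) \colon \Prod(\cat{A}) \to \Prod(\cat{B})$ is monadic, and the natural route is Beck's monadicity theorem: $\Prod(G)$ is conservative (reflects isomorphisms) and creates coequalizers of $\Prod(G)$-split pairs. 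Since $\Prod(G)$ already has a left adjoint, establishing these two properties suffices.

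The substance of the argument is to transfer the relatively-full-and-faithful hypothesis on $G$ into a statement about $\Prod(G)$. An object of $\Prod(\cat{A})$ is a small family $(A_i)_{i \in I}$ of objects of $\cat{A}$, and a morphism $(A_i)_{i\in I} \to (A'_j)_{j\in J}$ consists of a function $r \colon I \to J$ together with morphisms $A_i \to A'_{r(i)}$ in $\cat{A}$. The key point I would prove is that $\Prod(G)$ inherits a relative-full-and-faithfulness property: given the $G$-cartesianness of every morphism of $\cat{A}$, every morphism of $\Prod(\cat{A})$ is $\Prod(G)$-cartesian, so that in particular $\Prod(G)$ reflects isomorphisms (an isomorphism in $\Prod(\cat{B})$ has an underlying bijection on index sets, and cartesianness lets one lift the inverse). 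For the creation of split coequalizers, I would take a parallel pair in $\Prod(\cat{A})$ whose image under $\Prod(G)$ has a split coequalizer in $\Prod(\cat{B})$; the splitting gives, on index sets, a split coequalizer of sets, which is an absolute coequalizer, and componentwise the relative fullness and faithfulness of $G$ (equivalently, $G$-cartesianness) lets one lift the coequalizing morphism and the comparison data uniquely to $\cat{A}$, assembling into a coequalizer in $\Prod(\cat{A})$ that $\Prod(G)$ sends to the given one. This is essentially the observation that a relatively full and faithful functor is, on each connected component, full and faithful, so that $\Prod(G)$ behaves like a coproduct of fully faithful functors reindexed along a function, and such functors are easily seen to be monadic onto their essential image — but here the right-multiadjoint hypothesis guarantees the image is all of $\Prod(\cat{B})$ up to the adjunction, making $\Prod(G)$ genuinely monadic.

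The main obstacle I anticipate is bookkeeping the index sets correctly: morphisms in $\Prod(\cat{A})$ carry a function on indices plus componentwise morphisms in $\cat{A}$, and one must check that the split coequalizer in $\Prod(\cat{B})$, when restricted to indices, really does produce a (split, hence absolute) coequalizer of sets with the right coherence, and then that lifting componentwise via $G$-cartesianness is compatible across the reindexing. A subtlety is that a $\Prod(G)$-split pair need not be split componentwise in $\cat{B}$ in an obviously compatible way — the splitting lives at the level of the whole family — so the argument has to first extract the combinatorial (set-level) coequalizer and only then descend to components. Once that is handled, the componentwise lifting is exactly the universal property in Definition~\ref{def:rel_ff}, applied fiberwise. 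I would also remark that this recovers, as the special case $\cat{A} = \Field$, $\cat{B} = \Ring$, the identification used later in Section~\ref{sec:fields}, though the proposition itself is stated and proved by Diers and we are essentially reproducing his Proposition~6.0.
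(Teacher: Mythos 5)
First, a point of comparison: the paper does not actually prove this proposition. It is quoted verbatim from Diers~\cite[Prop.~6.0]{Diers1980} (see the sentence immediately preceding the statement), so there is no in-paper argument to measure yours against; your proposal has to stand on its own. Its top-level shape is reasonable: $\Prod(G)$ is a right adjoint directly by Definition~\ref{def:pro_adj_mon} (that is what "right multiadjoint" means here; Lemma~\ref{lem:multiadjoint} is only the translation to Diers's original definition), and one then wants conservativity plus creation of coequalisers of $\Prod(G)$-split pairs. The conservativity step does go through: an isomorphism in $\Prod(\cat{B})$ is a bijective reindexing together with componentwise isomorphisms, and $G$-cartesianness of every morphism of $\cat{A}$ lets you lift a two-sided inverse of each $Gf_j$ back to $\cat{A}$.

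Beyond that, there are concrete problems. First, you have the morphisms of $\Prod(\cat{A})$ backwards: a map $(A_i)_{i\in I}\to(A'_j)_{j\in J}$ in the free \emph{product} completion is a function $r\colon J\to I$ together with morphisms $A_{r(j)}\to A'_j$ (compare condition~(iii) of Proposition~\ref{prop:alg_mor}); what you describe is the free coproduct completion. Since your entire argument is index bookkeeping, this is not cosmetic: the index-set functor lands in $\Set^\opp$, and a split coequaliser in $\Prod(\cat{B})$ yields a split \emph{equaliser} of reindexing functions in $\Set$, not a split coequaliser. Second, the claim that right multiadjointness "guarantees the image is all of $\Prod(\cat{B})$ up to the adjunction" is false: $\Prod(\Field)\to\Prod(\Ring)$ is nowhere near essentially surjective. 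Third, and most seriously, the heart of the proof is missing. In a split coequaliser of $\Prod(G)f,\Prod(G)g$, each component of the coequalising object is only a \emph{retract} of the corresponding $GA'_j$, split by an idempotent of $GA'_j$ that need not be an identity, so it is not a priori of the form $GC$ for any $C\in\cat{A}$. Relative fullness and faithfulness lifts morphisms $GX\to GY$ sitting over a cospan in $\cat{A}$; it gives no way to lift objects or to split idempotents in $\cat{A}$. Showing that the splitting identities force these idempotents to be liftable — or, equivalently, constructing the coequaliser in $\Prod(\cat{A})$ directly, as the paper does in the special case of Proposition~\ref{prop:reflex_coeq} by projecting onto a subfamily of the indices — is precisely where the two hypotheses must interact, and the sketch does not engage with it. As written, the proposal does not yet amount to a proof.
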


For ease of reference, we reproduce one last powerful result about multimonadic functors which can be found in Diers~\cite[p.~661]{Diers1981a}. It relates the monadicity of the functor~\eqref{eq:pro_adjoint} to the multimonadicity of $G$, at least in the case of categories over $\Set$.

\begin{theorem}[Diers]\label{thm:multimonadic_iff}
    Let $G \colon \cat{A} \to \Set$ be a functor. The functor
    \[\begin{tikzcd}
    \Prod(\cat{A}) \ar[r, "\Prod(G)"] &[2em] \Prod(\Set) \ar[r] & \Set
    \end{tikzcd}\]
    is monadic iff $G$ is multimonadic and it satisfies the following condition:
    \begin{equation}\label{eq:no_prod_cond} \tag{D}
        \begin{tabular}{cc}
            \text{for any set $I$, $A \in \cat{A}$, and any family of morphisms $(f_i \colon A \to A_i)_{i \in I}$ in $\cat{A}$,} \\ 
            \text{if $(GA, Gf_i)_{i \in I}$ is a product cone, then $I$ has exactly one element.}
        \end{tabular}
    \end{equation}
    If this is the case, the corresponding monad is the codensity monad of $G$.
\end{theorem}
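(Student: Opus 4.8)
The plan is as follows. Write $E_{\cat A} \colon \cat A \hookrightarrow \Prod(\cat A)$ for the canonical fully faithful embedding, and $E \colon \Set \hookrightarrow \Prod(\Set)$ for its analogue; since $\Set$ has all small products, $E$ has a right adjoint $W \colon \Prod(\Set) \to \Set$, namely the functor sending a formal product of sets to its genuine product. The functor in the statement is then $L \coloneqq W \circ \Prod(G)$, and $L \circ E_{\cat A} \cong G$. The first step is to observe that $L$ has a left adjoint precisely when $G$ is a right multiadjoint: if $\Prod(G)$ is a right adjoint then so is $L$, as $W$ is one; conversely, a left adjoint $F$ to $L$ would satisfy $\Prod(\cat A)(FX, (A_i)_{i \in I}) \cong \prod_{i \in I} \Set(X, GA_i)$ naturally, and restricting to one-element families exhibits $\Set(X, G-) \colon \cat A \to \Set$ as a coproduct of representables, equivalently exhibits each comma category $X{\downarrow}G$ as a coproduct of categories with initial objects, which by Lemma~\ref{lem:multiadjoint} is precisely the right multiadjointness of $G$. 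Since each side of the asserted equivalence entails the existence of a left adjoint to $L$, I may assume throughout that $G$ is a right multiadjoint; Proposition~\ref{prop:codensity_pro_adjoint} then identifies the monad induced by $L$ with the codensity monad of $G$, which is the last sentence of the theorem, and the problem reduces to showing that \emph{$L$ is monadic if and only if $\Prod(G)$ is monadic (i.e.\ $G$ is multimonadic) and~\eqref{eq:no_prod_cond} holds}.

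Next I would dispose of~\eqref{eq:no_prod_cond}. Since monadic functors are conservative, it is enough to observe that if~\eqref{eq:no_prod_cond} failed there would be an object $A$ of $\cat A$, a set $I$ with $|I| \neq 1$, and morphisms $f_i \colon A \to A_i$ with $(GA, Gf_i)_{i \in I}$ a product cone; the family $(f_i)_{i \in I}$ is then a single morphism $\varphi \colon E_{\cat A}A \to (A_i)_{i \in I}$ of $\Prod(\cat A)$ whose underlying reindexing map $I \to \mathbf{1}$ is not a bijection, so $\varphi$ is not invertible, whereas $L\varphi$ is the canonical map $GA \to \prod_{i \in I} GA_i$ and hence invertible --- contradicting conservativity of $L$. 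Thus~\eqref{eq:no_prod_cond} is necessary for $L$ to be monadic; in the other direction of the theorem it is part of the hypothesis.

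It remains to prove that, assuming~\eqref{eq:no_prod_cond}, $L$ is monadic if and only if $\Prod(G)$ is. Both implications would be obtained from Beck's monadicity theorem, using that split coequalizers are absolute (and so preserved by $W$) and that $W$ is a right adjoint whose left adjoint $E$ is fully faithful. For the implication from $\Prod(G)$ monadic to $L$ monadic: $L$ has a left adjoint; $L$ is conservative because $\Prod(G)$ is and, crucially,~\eqref{eq:no_prod_cond} forces $W$ to reflect invertibility along the image of $\Prod(G)$; and $L$ creates coequalizers of $L$-split pairs, because such a pair becomes $\Prod(G)$-split once one lifts the splitting data to $\Prod(\Set)$ (again using~\eqref{eq:no_prod_cond}), after which monadicity of $\Prod(G)$ supplies the coequalizer and $W$ preserves the resulting split coequalizer. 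For the converse implication: $\Prod(G)$ has a left adjoint and is conservative (if $\Prod(G)\psi$ is invertible then so is $L\psi = W\Prod(G)\psi$, and hence $\psi$); a $\Prod(G)$-split pair is $L$-split on applying $W$, so the monadic $L$ creates its coequalizer $q$ in $\Prod(\cat A)$, and one then checks --- using~\eqref{eq:no_prod_cond} once more to promote a morphism $h$ with $Wh$ an identity to an isomorphism --- that $\Prod(G)q$ is, up to isomorphism, the given split coequalizer, so that $\Prod(G)$ creates it. Beck's theorem then yields monadicity of $\Prod(G)$.

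The main obstacle, in both directions, is the functor $W \colon \Prod(\Set) \to \Set$, which is neither faithful nor conservative: it collapses a formal product $(X_i)_{i \in I}$ onto any object whose genuine product agrees with $\prod_i X_i$, and correspondingly sends many non-invertible morphisms of $\Prod(\Set)$ to isomorphisms. Condition~\eqref{eq:no_prod_cond} is exactly what forbids this collapse among the objects arising from $\cat A$, and the one genuinely technical point --- the place where I would expect to do the real work --- is making this quantitative: showing that, in the presence of multimonadicity of $G$, condition~\eqref{eq:no_prod_cond} is equivalent to $W$ reflecting isomorphisms along the image of $\Prod(G)$, and then feeding this both into the conservativity check and into the creation of split coequalizers.
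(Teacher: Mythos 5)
The paper does not actually prove this theorem---it is quoted from Diers with a citation---so there is no internal argument to compare yours against; I can only assess the proposal on its own terms. The outer layers are correct and essentially complete: the identification of the functor as $L = W \circ \Prod(G)$ with $E \dashv W$, the equivalence of ``$L$ has a left adjoint'' with ``$G$ is a right multiadjoint'' via Lemma~\ref{lem:multiadjoint}, the identification of the induced monad with the codensity monad via Proposition~\ref{prop:codensity_pro_adjoint}, and the necessity of~\eqref{eq:no_prod_cond} from conservativity of monadic functors are all sound. The lemma you isolate as the crux---that under~\eqref{eq:no_prod_cond} the functor $W$ reflects isomorphisms on the image of $\Prod(G)$---is indeed true and provable: taking $I = \{1,2\}$ and $f_1 = f_2 = 1_A$ in~\eqref{eq:no_prod_cond} shows every $GA$ has at least two elements; then for $\psi$ with reindexing $\alpha \colon J \to I$ and components $g_j$, bijectivity of $W\Prod(G)\psi$ forces $\alpha$ surjective (otherwise the map factors through a non-injective projection), and writing $W\Prod(G)\psi$ as the product over $i \in I$ of the maps $GA_i \to \prod_{j \in \alpha^{-1}(i)} GB_j$ (all factors nonempty) exhibits each fibre cone as a product cone, whence $|\alpha^{-1}(i)| = 1$ and each $Gg_j$ is invertible. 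But you do not carry this out, and you explicitly defer ``the real work'' to it.

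The genuine gap is in the coequalizer steps, where the one concrete mechanism you propose does not work. In the direction ``$\Prod(G)$ monadic implies $L$ monadic'' you claim an $L$-split pair ``becomes $\Prod(G)$-split once one lifts the splitting data to $\Prod(\Set)$''. The splitting maps $s$ and $t$ of an $L$-split pair are arbitrary functions between the realized products $\prod_i GA_i$ and $\prod_j GB_j$, whereas a morphism of $\Prod(\Set)$ is a reindexing function together with componentwise maps, so that under $W$ each output coordinate depends on a single input coordinate. Nothing in~\eqref{eq:no_prod_cond} forces $s$ and $t$ to be of this constrained form, so the pair need not become $\Prod(G)$-split, and creating its coequalizer requires a genuinely different argument (e.g.\ a direct analysis of how an $L$-splitting interacts with the decomposition into connected components). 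Symmetrically, in the converse direction the comparison morphism $h$ from the given split coequalizer $\mathbf{Q}$ in $\Prod(\Set)$ to $\Prod(G)$ of the created coequalizer has $Wh$ invertible, but $\mathbf{Q}$ need not lie in the image of $\Prod(G)$---its components are only retracts of the $GB_j$ and so may be singletons---so your iso-reflection lemma does not apply to it, and promoting $h$ to an isomorphism needs a separate analysis of split coequalizers in $\Prod(\Set)$. The proposal is therefore a plausible skeleton with a correct reduction and a correct key lemma left unproved, but both directions of the central monadicity equivalence remain open as written.
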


Condition~\eqref{eq:no_prod_cond} implies that $G$ does not create any products indexed by a set $I$, unless $I$ is a singleton. If $G$ reflects products (e.g.\ if $G$ is full and faithful), then these two conditions are equivalent.

\begin{examples}
    \begin{enumerate}[(i)]
        \item[]
        \item The argument in Example~\ref{ex:rep_small}\ref{ex:rs:field} shows, in the light of Lemma~\ref{lem:multiadjoint}, that the forgetful functor $F \colon \Field \to \Ring$ is a right multiadjoint. It is clear from Definition~\ref{def:pro_adj_mon} that right multiadjoints are closed under composition, so the composite $UF \colon \Field \to \Ring \to \Set$ is also a right multiadjoint. Since all field homomorphisms are injective, it is easy to see that $UF$ is relatively full and faithful, and hence multimonadic by Proposition~\ref{prop:multiadjoint_rel_ff}. Moreover, $UF$ reflects limits, because both $U$ and $F$ do, and it is clear that it does not create any nontrivial products. It follows from Theorem~\ref{thm:multimonadic_iff} that the functor $\Prod(\Field) \to \Set$ taking a formal product of fields to the product of their underlying sets is monadic, and that it is the monadic reflection of $UF$.
        
        This functor factors as $\Prod(\Field) \xrightarrow{P} \Ring \xrightarrow{U} \Set$, where $U$ is monadic and $P$ is a right adjoint. It follows from Beck's monadicity theorem that $P$ is also monadic, and from Proposition~\ref{prop:codensity_pro_adjoint} that the corresponding monad is the codensity monad of $F$.
        \item Similarly, Example~\ref{ex:rep_small}\ref{ex:rs:plus_E} shows that $P_E \colon \Set \to \Set$ is a right multiadjoint. In fact, it is also relatively full and faithful. Indeed, let $f \colon X \to Z$ and $g \colon Y \to Z$ be functions, and $m \colon P_EX \to P_EY$ be such that $P_Eg \cdot m = P_E f$. It follows that $m$ must act as the identity on $E$, so it is of the form $P_Eh$ for a unique $h \colon X \to Y$ such that $gh = f$.
        
        Condition~\eqref{eq:no_prod_cond} is satisfied as soon as $E$ has at least two elements. It is clear that it fails for $E = \varnothing$ and $1$ (the latter one by taking $I = \varnothing$). Now suppose $e,e' \in E$ are two distinct elements, and let $I$ and $X$ be sets, and $(f_i \colon X \to X_i)_{i \in I}$ be a cone such that $(P_EX,P_Ef_i)_{i \in I}$ is a product cone. Since $P_EX$ is not terminal, $I$ is nonempty. If $I$ has at least two elements, $i$ and $i'$, then there is some element of $P_EX$ that maps to $e$ under $P_Ef_i$ and to $e'$ under $P_Ef_{i'}$, but this is impossible from the definition of $P_E$. It follows that $I$ is a singleton. 

        Theorem~\ref{thm:multimonadic_iff} then gives a proper class of monadic functors $Q_E \colon \Prod(\Set) \to \Set$ (one for each $E$ with at least two elements), sending the formal product of $(X_i)_{i \in I}$ to the cartesian product $\prod_{i \in I} (X_i + E)$.
    \end{enumerate}

    \begin{remark}
        Given that Proposition~\ref{prop:codensity_pro_adjoint} applies to more general limit completions than those under products, it is possible to imagine generalisations of Theorem~\ref{thm:multimonadic_iff} to $\mathfrak{D}$-pro-monadic functors, where $\mathfrak{D}$ is perhaps a sound doctrine of limits. We leave this investigation for future work.
    \end{remark}
\end{examples}

\section{Pushing forward monads on finite sets}
\label{sec:finset}

In this last section, we explicitly compute the pushforward along $i \colon \FinSet \hookrightarrow \Set$ of several familiar monads on $\FinSet$. We will also show that, with the exception of two examples, all pushforwards along $i$ have no rank.

Note that $i$ is a full and faithful, representably small functor into a complete category, so we are in the setting of Theorem~\ref{thm:mnd_adj}. In particular, the problem of finding monads on $\FinSet$ reduces to finding monads on $\Set$ that restrict along $i$, i.e.\ that preserve finiteness.

\begin{examples}\label{ex:easy_pf_finset}
    There are at least three (families of) monads whose pushforward along $i$ is easy to identify.
    \begin{enumerate}[(i)]
        \item \label{ex:pff:T} Let $T$ be the terminal monad on $\FinSet$, i.e.\ the one that is constant at $1$. It follows from the limit preservation property of Lemma~\ref{lem:pf_functor} that $i_\#T$ is the terminal monad on $\Set$.
        \item \label{ex:pff:S} Let $S$ be the unique submonad of $T$ such that $S\varnothing = \varnothing$. For any nonempty set $X$, the limit formula in~\eqref{eq:ptw_pf} is the same for $i_\#T(X)$ and $i_\#S(X)$, so we have $i_\#S(X) = 1$. Since $i$ is full and faithful, $i_\#S(\varnothing) = \varnothing$. Hence, $i_\#S$ is the other inconsistent monad on $\Set$.
        \item \label{ex:pff:dd} Let $n$ be a finite set, and $D$ be the endomorphism monad of $n$ in $\FinSet$, so that $D(X) = n^{\Set(X,n)}$ for any finite set $X$. This is the codensity monad of $n \colon \mathsf{1} \to \FinSet$. Note that the right Kan extension $n_\#1$ is pointwise, and its computation only involves finite limits. Since $i$ preserves finite limits, it preserves $\Ran_n n$, and by Lemma~\ref{lem:pf_preserve} we have $i_\#D = i_\#(n_\#1) \cong (in)_\#1$, which is the endomorphism monad of $n$ in $\Set$.

        The same argument shows that, for any functor $G \colon \cat{A} \to \FinSet$ from a finite category, and any monad $T$ on $\cat{A}$, we have $i_\#(G_\#T) = (iG)_\# T$.
    \end{enumerate}
\end{examples}

We will consider the pushforward of three more families of monads on $\FinSet$, which we introduce now. 

\begin{itemize}
    \item For any finite set $E$, the functor $(\cdot) + E \colon \Set \to \Set$ has a well-known monad structure whose category of algebras is $E/\Set$. We denote this monad by $P_E$ and refer to it as the exception monad. Since it sends finite sets to finite sets, it restricts to a monad on $\FinSet$, which we denote by $P_E^\rmf \coloneqq i^\# P_E$.
    \item For any finite monoid $M$, the functor $M \times (\cdot) \colon \Set \to \Set$ has a well-known monad structure whose algebras are left $M$-sets, i.e.\ sets with an action of $M$ on the left. We denote this monad by $A_M$. Again, since this sends finite sets to finite sets, it restricts to a monad on $\FinSet$, which we denote by $A_M^\rmf \coloneqq i^\# A_M$.
    \item The covariant powerset functor $\PS \colon \Set \to \Set$ has a well-known monad structure whose category of algebras is the category of suplattices. Once again, this sends finite sets to finite sets, so it restricts to a monad on $\FinSet$, which we denote by $\PS^\rmf \coloneqq i^\# \PS$. Algebras for $\PS^\rmf$ are finite bounded join-semilattices, i.e.\ commutative monoids with an idempotent operation ($x^2 = x$ for all $x$).
\end{itemize}

Before delving into these examples, it will be useful to recall the situation for the pushforward of the identity on $\FinSet$, i.e.\ the codensity monad of $i$. Kennison and Gildenhuys~\cite[p.~341]{Kennison1971} showed that $i_\# 1$ is the ultrafilter monad, which we denote by $\beta$, whose category of algebras is the category of compact Hausdorff topological spaces and continuous maps. The functoriality of $i_\#$ and Example~\ref{ex:monad_morphisms}\ref{ex:mm:eta} give monad maps $\beta \to i_\# T^\rmf$ for any monad $T^\rmf$ on $\FinSet$. Moreover, each of the monads described above is the monad lift through $i$ of a monad $T$ on $\Set$. We therefore have a map $T \to i_\#i^\# T$ which is the unit of the adjunction of Theorem~\ref{thm:mnd_adj}. In terms of Eilenberg--Moore categories, this gives a commutative square of forgetful (in fact, monadic) functors
\begin{equation}\label{eq:monadic_square}
\begin{tikzcd}
& \Set^{i_\#i^\#T} \ar[dl, end anchor=north east] \ar[dr, end anchor=north west] \\
\CHaus \ar[dr, "U^\beta"', start anchor=south east, end anchor=north west] && \Set^T \ar[dl, "U^T", start anchor=south west, end anchor=north east] \\
& \Set
\end{tikzcd}
\end{equation}
for any monad $T$ on $\Set$ which restricts along $i$.

This situation highlights a similarity between pushforwards of lifted monads and distributive laws between $T$ and $\beta$, whereby the algebras for $i_\# i^\# T$ have underlying $\beta$- and $T$-algebra structures. In the first two cases, we will see that this similarity is manifest, in that $i_\# P_E^\rmf$ and $i_\# A_M^\rmf$ are the composite monads given by distributive laws between $P_E$ and $\beta$, and $A_M$ and $\beta$, respectively. In the third case, there is no known distributive law between $\PS$ and $\beta$, but there is a \emph{weak} distributive law of $\PS$ over $\beta$, studied by Garner~\cite{Garner2020}. It will turn out that $i_\# \PS^\rmf$ is the composite monad induced by this weak distributive law.

\begin{remark}
  As far as we can tell, the appearance of these (weak) distributive laws is a coincidence. Of course, since the pushforward construction makes sense in any $2$-category, one can think about the existence of pushforwards in $\MND(\CAT)$ or $\EM(\CAT)$. Monads in $\MND(\CAT)$ were identified by Street as distributive laws~\cite[\S6]{Street1972}, while monads in $\EM(\CAT)$ are Lack and Street's concept of monad wreaths~\cite[\S3]{Lack2002}. However, these $2$-categories do not have the usual calculus of weighted limits, which simplifies the process of computing pushforwards.

  Let $S$ and $T$ be monads on a category $\cat{A}$. It is a standard result (see \cite[\S9.2.2]{Barr1985}) that distributive laws $\delta \colon TS \to ST$ correspond to liftings of $S$ to $\cat{A}^T$, by which we mean a monad $S^T$ on $\cat{A}^T$ which satisfies $U^T S^T = S U^T$ and two more axioms. Lack and Street~\cite[p.~257]{Lack2002} showed that a monad wreath amounts to a similar lifting, but this time without the last two axioms. Let $R$ be a monad on $\cat{B}$, and $(G,\phi)$ be a lax transformation from $T$ to $R$, i.e.\ a $1$-cell in $\MND(\CAT)$ or, equivalently, in $\EM(\CAT)$. By Proposition~\eqref{prop:lax_trans_EM_lift}, this corresponds to a commutative square of functors
  \[\begin{tikzcd}
    \cat{A}^T \ar[d, "U^T"'] \ar[r, "G^\phi"]& \cat{B}^R \ar[d, "U^R"]\\
    \cat{A} \ar[r, "G"'] & \cat{B}.
  \end{tikzcd}\]
  Naivelly, one would hope to push the lifted monad $S^T$ on $\cat{A}^T$ forward along $G^\phi$ to get a monad on $\cat{B}^R$ which is a lifting of $G_\#S$, thus pushing forward a distributive law/wreath on $\cat{A}$ to one on $\cat{B}$. However, even the most simple examples show that this is not the case in general. In particular, the equation $U^R (G^\phi_\# S^T) = (G_\#S) U^R$ seems to fail often.

  For instance, one can take $\cat{A} = \FinSet$ and $\cat{B} = \Set$, with $T = S = 1$ and $R = \beta$, and $G = i \colon \FinSet \to \Set$ with $\phi = \eps$, the counit of $i_\# 1 = \beta$. The pushforward $i^\eps_\# 1$ is the monad described in Example~\ref{ex:towers}\ref{ex:towers:Stone} which sends a compact Hausdorff space to the space of utrafilters on its Boolean algebra of clopen subsets. Clearly, $U^\beta (i^\eps_\# 1) \neq (i_\# 1) U^\beta$, as can be seen by evaluating at any connected compact Hausdorff space with an infinite underlying set. The situation is no better when $G$ and $G^\phi$ are right adjoints; consider, for example, $\cat{A} = \Ring$ with $T = S = 1$, and $\cat{B} = \Set$ with $\cat{B}^R = \Ab$.
\end{remark}

Because filters and ultrafilters on a set will play a key role in these examples of pushforwards along $i \colon \FinSet \hookrightarrow \Set$, we now write down the definitions to fix our notation.

\begin{definition}\label{def:filters}
    Let $X$ be a set. A \textbf{filter} on $X$ is a subset $\calF$ of $\PS X$, such that
    \begin{enumerate}[(i)]
        \item $X \in \calF$,
        \item for $A, B \subseteq X$, we have $A \cap B \in \calF$ iff $A, B \in \calF$\footnote{This condition is often split into two: if $A \subseteq B \subseteq X$ and $A \in \calF$, then $B \in \calF$; and if $A, B \in \calF$, then $A \cap B \in \calF$. The equivalence between the two definitions is straightforward.}.
    \end{enumerate}
    An \textbf{ultrafilter} is a filter that also satisfies: for $A \subseteq X$, either $A \in \calF$ or $X \setminus A \in \calF$, but not both. Ordering the filters on $X$ by inclusion, ultrafilters are precisely the maximal proper ones. 
    
    Given $A \subseteq X$, the \textbf{principal filter} at $A$ is the filter
    \[\pf{A} = \{B \subseteq X \mid A \subseteq B \}.\]
    This is an ultrafilter iff $A = \{x\}$ for some $x \in X$, in which case it is called the \textbf{principal ultrafilter} at $x$. Any filter on a finite set is principal; indeed, a filter is closed under finite intersections, and taking the intersection over the entire filter gives a least element.
    
    Let $FX$ and $\beta X$ denote the set of filters and ultrafilters on $X$, respectively. Then $F$ becomes a functor as follows: given a function $f \colon X \to Y$ and $\calF \in FX$, let
    \[Ff(\calF) = \{ B \subseteq Y \mid f^{-1}B \in \calF \}.\]
    One readily checks that this is a filter on $Y$, and that this assignment is functorial. Note that for $A \subseteq X$, we have $Ff(\pf A) = \pf f(A)$. Moreover, $Ff(\calF)$ is an ultrafilter if $\calF$ is, so $\beta$ becomes a subfunctor of $F$.

    Both $F$ and $\beta$ have well-known monad structures. Let $X$ be a set. In both cases, the unit sends $x \in X$ to $\pf{\{x\}}$. Given $\mathfrak{F} \in FFX$, we have
    \begin{equation}\label{eq:filter_mult}
        \mu^F_X(\mathfrak{F}) = \{ A \subseteq X \mid A^\# \in \mathfrak{F}\} \quad \text{where} \quad A^\# = \{\calF \in FX \mid A \in \calF\}.
    \end{equation}
    If $\mathfrak{F} \in \beta \beta X$, then 
    \begin{equation}\label{eq:ultrafilter_mult}
        \mu^\beta_X(\mathfrak{F}) = \{ A \subseteq X \mid A^\# \cap \beta X \in \mathfrak{F} \}.
    \end{equation}
\end{definition}

Since $\FinSet$ is essentially small and $\Set$ is complete, all pushforwards along $i$ are pointwise, given by the limit formula~\eqref{eq:ptw_pf}. For $g \colon X \to Y$ a function into a finite set, we will write $\ang{g} \colon i_\#T^\rmf(X) \to T^\rmf Y$ for the leg of the limit cone at $g \in X{\downarrow}i$. Which monad $T^\rmf$ on $\FinSet$ this refers to will be clear from the context. The cone condition implies that if $f \colon Y \to Y'$ is a function between finite sets then $\ang{fg} = T^\rmf f \cdot \ang{g}$.

As a warmup for the remaining examples of pushforwards, we review how $\beta$ is the codensity monad of $i$. Since every ultrafilter on a finite set is principal, it is easy to see that $i^\# \beta$ is the identity monad, thus giving the unit map $\nu \colon \beta \to i_\# 1$. For a set $X$, an ultrafilter $\calF$ on $X$, and a function $g \colon X \to Y$ into a finite set, we have that $(\ang{g} \cdot \nu_X)(\calF)$ is the element at which $\beta g (\calF)$ is principal, i.e.\ the unique $y \in Y$ such that $\{y\} \in \beta g(\calF)$, or equivalently such that $g^{-1}\{y\} \in \calF$. The inverse of $\nu_X$ sends $\phi \in i_\#1(X)$ to the set of those subsets $A \subseteq X$ such that $\ang{\chi_A} (\phi) = \top$, where $\chi_A \colon X \to 2 = \{\bot,\top\}$ is the characteristic function of $A$. Note that $\nu_X (\pf{\{x\}})$ is given by evaluation at $x$, in the sense that $(\ang{g} \cdot \nu_X)(\pf{\{x\}}) = g(x)$.

\subsection{The exception and \texorpdfstring{$M$}{M}-set monads}
\label{ssec:excp_Mset}

The cases of $P^\rmf_E = (\cdot) + E$ and $A^\rmf_M = M \times (\cdot)$ are similar, so we treat them together. We will need the following standard fact about the ultrafilter monad.
\begin{proposition}\label{prop:ultra_preserve_coprod}
    The functor $\beta$ preserves finite coproducts. In particular, for any set $X$, and finite sets $E$ and $M$, we have isomorphisms
    \[\beta(X + E) \cong \beta X + E \quad \text{and} \quad \beta (M \times X) \cong M \times \beta X\]
    natural in $X$. 
\end{proposition}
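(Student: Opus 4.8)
The plan is to prove that $\beta$ preserves finite coproducts, from which the two displayed isomorphisms follow by taking the coproduct of $X$ with the finite discrete set $E$ (resp.\ by writing $M \times X \cong \coprod_{m \in M} X$ for finite $M$). Since $\beta$ evidently preserves the empty coproduct ($\beta \emptyset = \emptyset$, as there are no ultrafilters on $\emptyset$), it suffices to show $\beta(X + Y) \cong \beta X + \beta Y$ naturally, for all sets $X, Y$; finite coproducts are then built up by induction. The coproduct injections $X \hookrightarrow X + Y \hookleftarrow Y$ induce, by functoriality, a canonical map $\beta X + \beta Y \to \beta(X+Y)$, and the task is to show this is a bijection.

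The key step is a direct combinatorial analysis of ultrafilters on a disjoint union. First I would observe that for any ultrafilter $\calF$ on $X + Y$, exactly one of $X \in \calF$ or $Y \in \calF$ holds: they are complementary subsets of $X+Y$, so the ultrafilter axiom forces exactly one to belong to $\calF$. This gives a well-defined partition of $\beta(X+Y)$ into those $\calF$ containing $X$ and those containing $Y$. Next, if $X \in \calF$, I claim $\calF$ is the image under the injection $\beta X \to \beta(X+Y)$ of the ultrafilter $\calF|_X \coloneqq \{ A \subseteq X \mid A \in \calF \}$ on $X$ (which is an ultrafilter precisely because $X \in \calF$, so that for $A \subseteq X$ the complement of $A$ in $X$ differs from its complement in $X+Y$ only by the set $Y \notin \calF$). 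Conversely, any $\calG \in \beta X$ pushes forward to an ultrafilter on $X+Y$ containing $X$. These two assignments are mutually inverse, and symmetrically for $Y$; assembling them gives the inverse to the canonical map $\beta X + \beta Y \to \beta(X+Y)$. Naturality in $X$ (and $Y$) is then a routine check: both the canonical map and the constructed inverse are defined purely in terms of preimages along functions, and compatibility with $\beta f$ follows from the formula $\beta f(\calF) = \{B \mid f^{-1}B \in \calF\}$.

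I do not expect a serious obstacle here; the only mild subtlety is bookkeeping the difference between "complement in $X$" and "complement in $X+Y$" when checking that $\calF|_X$ is a genuine ultrafilter and that the two constructions invert each other. For the two special cases, the isomorphism $\beta(X+E) \cong \beta X + E$ uses additionally that $\beta E \cong E$ for finite $E$ (every filter on a finite set is principal, and the ultrafilters are exactly the principal ones at points), which is recorded in Definition~\ref{def:filters}; similarly $\beta M \cong M$ gives $\beta(M \times X) \cong \coprod_{m} \beta X \cong M \times \beta X$. All of these isomorphisms are natural in $X$ because they are composites of the natural coproduct-comparison maps with the natural isomorphisms $\beta E \cong E$, $\beta M \cong M$ on the constant factors.
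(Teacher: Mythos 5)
Your proposal is correct and follows essentially the same route as the paper: the paper's proof likewise partitions $\beta(X+Y)$ according to whether $X$ or $Y$ belongs to the ultrafilter and takes $\calF \cap \PS X$ (your $\calF|_X$) as the inverse to the canonical map $\beta X + \beta Y \to \beta(X+Y)$. Your additional remarks on the nullary case, naturality, and the reduction of the two displayed isomorphisms to $\beta E \cong E$ and $M \times X \cong \coprod_{m \in M} X$ are all consistent with what the paper leaves implicit.
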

\begin{proof}
    Let $X$ and $Y$ be sets. For $\calF \in \beta (X + Y)$, either $X \in \calF$ or $Y \in \calF$, but not both. If $X \in \calF$, then $\calF \cap \PS X$ is an ultrafilter on $X$, and similarly for $Y$. This gives an inverse to the canonical map $\beta X + \beta Y \to \beta (X + Y)$.
\end{proof}

In fact, these isomorphisms turn out to be distributive laws of $\beta $ over $P_E$ and $A_M$, respectively. In reality, there is no need to specify them as being of one monad over the other, since, being invertible, their inverses are distributive laws in the opposite direction.

\begin{proposition}\label{prop:distrib_laws}
    The isomorphisms of Proposition~\ref{prop:ultra_preserve_coprod} are distributive laws of $\beta$ over $P_E$, and over $A_M$, respectively.
\end{proposition}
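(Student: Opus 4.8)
The plan is to check, for each of the two natural isomorphisms of Proposition~\ref{prop:ultra_preserve_coprod}, the four coherence axioms defining a distributive law, handling the two cases in parallel since the arguments are formally identical. Write $\lambda^E_X \colon \beta(X+E) \to \beta X + E$ and $\lambda^M_X \colon \beta(M \times X) \to M \times \beta X$ for these isomorphisms; recall from the proof of Proposition~\ref{prop:ultra_preserve_coprod} that an ultrafilter $\calF$ on $X+E$ satisfies exactly one of $X \in \calF$ or $E \in \calF$, that $\lambda^E_X$ sends $\calF$ to the restriction $\calF \cap \PS X \in \beta X$ in the first case and to the unique $e \in E$ with $\{e\} \in \calF$ in the second, and that $\lambda^M_X$ likewise records which of the $M$-many copies of $X$ inside $M \times X$ an ultrafilter concentrates on, together with its trace there. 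A distributive law of $\beta$ over $P_E$ is a natural transformation $\beta P_E \Rightarrow P_E \beta$ compatible with the units and the multiplications of both monads, so what must be verified is: (DL1) $\lambda^E \circ \eta^\beta P_E = P_E \eta^\beta$; (DL2) $\lambda^E \circ \beta \eta^{P_E} = \eta^{P_E} \beta$; (DL3) $\lambda^E \circ \beta \mu^{P_E} = \mu^{P_E} \beta \circ P_E \lambda^E \circ \lambda^E P_E$; and (DL4) $\lambda^E \circ \mu^\beta P_E = P_E \mu^\beta \circ \lambda^E \beta \circ \beta \lambda^E$, together with the four analogous identities for $\lambda^M$ and $A_M$.

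The two unit axioms are immediate from the explicit description: (DL1) just says that the principal ultrafilter at a point of $X+E$ is sent by $\lambda^E$ to the principal ultrafilter at that same point of $X$, or to that same point of $E$; and (DL2) says that $\beta$ applied to the coprojection $X \hookrightarrow X+E$ produces an ultrafilter concentrated on the $X$-side whose restriction recovers the original one, which holds because the preimage of $A \subseteq X$ along that coprojection is $A$ itself. For the multiplication axioms one runs a short case analysis over which summand the relevant ultrafilter concentrates on: three cases for (DL3), according to whether the ultrafilter on $(X+E)+E$ sees the innermost $X$, the inner copy of $E$, or the outer copy of $E$; and two cases for (DL4), using the formula~\eqref{eq:ultrafilter_mult} for $\mu^\beta$, according to whether the ultrafilter of ultrafilters has as a member the set of ultrafilters concentrated on $X$ or the set of those concentrated on $E$. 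In each case both composites are read off directly and seen to agree; the $A_M$ versions are identical with ``which copy of $X$'' replacing ``which summand''.

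The reason all of this goes through — which I would record as the conceptual heart of the argument — is that $P_E$ is the functor $\mathrm{Id}_\Set + K_E$ (coproduct with the constant functor at $E$) and $A_M$ is the finite copower $M \cdot \mathrm{Id}_\Set$, and that in each case the unit and multiplication of the monad are assembled entirely from coproduct coprojections and from the fold map $E + E \to E$, respectively the reindexing maps induced by $1 \to M$ and $M \times M \to M$. Since $\beta$ preserves finite coproducts by Proposition~\ref{prop:ultra_preserve_coprod}, and $\beta E \cong E$, $\beta M \cong M$ for the finite sets in play, the isomorphisms $\lambda^E$ and $\lambda^M$ are precisely the comparison maps exhibiting this preservation, so they intertwine every coprojection and fold map occurring in the monad structures, and the distributive-law axioms are then just instances of that intertwining. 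Equivalently, one may invoke the standard correspondence between distributive laws $\beta P_E \Rightarrow P_E \beta$ and liftings of $P_E$ to a monad on $\Set^\beta = \CHaus$: the lift sends a compact Hausdorff space $Y$ to the disjoint union $Y \sqcup E$ with $E$ discrete (still compact Hausdorff since $E$ is finite), with the evident continuous unit and multiplication, and one checks that this lift induces exactly $\lambda^E$; similarly $A_M$ lifts by $Y \mapsto M \times Y$ with $M$ discrete.

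I do not anticipate a real obstacle: the entire content is the bookkeeping in the multiplication axioms, and the only points needing genuine care are getting the handedness of the axioms right and keeping the nested summands of $(X+E)+E$ (respectively the iterated copower $(M \times M) \times X$) straight throughout the case analysis.
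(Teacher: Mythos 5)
Your plan is correct and follows essentially the same route as the paper: verify the two unit axioms and the $\mu^{P_E}$-compatibility directly from the explicit description of the comparison isomorphism, and handle the $\mu^\beta$-compatibility by a case split on whether the ultrafilter of ultrafilters concentrates on the $X$-side or the $E$-side, which is exactly the two-case computation the paper spells out using~\eqref{eq:ultrafilter_mult}. One small caution: your ``conceptual heart'' paragraph overreaches slightly, since the fact that $\eta^{P_E}$ and $\mu^{P_E}$ are built from coprojections and the fold map $E+E\to E$ does dispatch (DL2) and (DL3) formally, but (DL4) involves $\mu^\beta$, which is not coproduct data, so that axiom genuinely requires the ultrafilter computation in your case analysis rather than following from preservation of finite coproducts alone.
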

\begin{proof}
    We check this for the $P_E$ case, the other being similar. Let $\delta$ denote the isomorphism $\beta P_E \cong P_E \beta$ of Proposition~\ref{prop:ultra_preserve_coprod}. We need to prove the commutativity of four diagrams:
    \[\begin{tikzcd}[column sep=0.2em]
    & \beta \ar[dl, "\beta \eta"'] \ar[dr, "\eta \beta"] \\
    \beta P_E \ar[rr, "\delta"] && P_E \beta \\
    & P_E \ar[ul, "\eta P_E"] \ar[ur, "P_E \eta"']
    \end{tikzcd} \qquad 
    \begin{tikzcd}
    \beta P_E^2 \ar[r, "\delta P_E"] \ar[d, "\beta \mu"] & P_E \beta P_E \ar[r, "P_E \delta"] & P_E^2 \beta \ar[d, "\mu \beta"] \\
    \beta P_E \ar[rr, "\delta"] && P_E \beta \\
    \beta^2 P_E \ar[r, "\beta \delta"] \ar[u, "\mu P_E"'] & \beta P_E \beta \ar[r, "\delta \beta"] & P_E \beta^2 \ar[u, "P_E \mu"']
    \end{tikzcd}\]
    The commutativity of the two triangles and the top pentagon is direct from the definitions; we spell out that of the bottom pentagon. Let $X$ be a set, and $\mathfrak{F} \in \beta^2 P_E X$. Since $E$ is finite, an ultrafilter $\calF$ on $P_E X$ contains $E$ iff $\calF = \pf\{e\}$ for some $e \in E$. Hence,
    \[E^\# \cap \beta P_EX = \{\pf\{e\} \mid e \in E \},\]
    with the notation from \eqref{eq:filter_mult}. There are two cases:
    \begin{enumerate}[(i)]
        \item If $E \in \mu_{P_EX}(\mathfrak{F})$, then $E^\# \cap \beta P_EX \in \mathfrak{F}$. This in turn means that $\mathfrak{F} = \pf \{\pf \{e\}\}$ for some $e \in E$, and that $\delta_X \mu_{P_EX} (\mathfrak{F}) = e$. For the bottom composite, we have
        \[\begin{tikzcd}
            \pf\{\pf\{e\}\} \ar[r, "\beta \delta_X", mapsto] & \pf\{e\} \ar[r, "\delta_{\beta X}", mapsto] & e \ar[r, "P_E \mu_X", mapsto] & e.
        \end{tikzcd}\]
        \item If $E \notin \mu_{P_EX}(\mathfrak{F})$, then $\delta_X \mu_{P_EX} (\mathfrak{F}) = \mu_{P_EX}(\mathfrak{F}) \cap \PS X$. In this case, $E^\# \cap \beta P_EX \notin \mathfrak{F}$. But $\delta^{-1}_X E = E^\# \cap \beta P_EX$, so $E \notin \beta \delta_X(\mathfrak{F})$. In turn, 
        \[\delta_{\beta X}(\beta \delta_X(\mathfrak{F})) = \beta \delta_X(\mathfrak{F}) \cap \PS \beta X \in \beta^2 X.\]
        Applying $\mu_X$ to this, we get an ultrafilter on $X$ that contains $A \subseteq X$ iff
        \[A^\# \cap \beta X \in \beta \delta_X(\mathfrak{F}) \cap \PS \beta X \iff A^\# \cap \beta X \in \beta \delta_X(\mathfrak{F}) \iff \delta^{-1}_X (A^\# \cap \beta X) \in \mathfrak{F}.\]
        On the other hand, 
        \[A \in \mu_{P_EX}(\mathfrak{F}) \cap \PS X \iff A^\# \cap \beta P_E X \in \mathfrak{F}.\] But an ultrafilter $\calF$ on $P_EX$ contains $A$ iff $\delta_X(\calF)$ is an ultrafilter on $X$ containing $A$, so $\delta^{-1}_X(A^\# \cap \beta X) = A^\# \cap \beta P_EX$.
    \end{enumerate}
\end{proof}

These distributive laws give the composite functors $P_E\beta$ and $A_M\beta$ monad structures. We will show that these are the pushforwards of $P_E^\rmf$ and $A_M^\rmf$. 

\begin{theorem}\label{thm:PE_AM_extensions}
    Let $E$ be a finite set and $M$ be a finite monoid. We have isomorphisms of monads
    \[P_E \beta \cong i_\# P_E^\rmf \qquad \text{and} \qquad A_M \beta \cong i_\# A_M^\rmf,\]
    where the monad structures on the left are those induced by the distributive laws of Proposition~\ref{prop:distrib_laws}.
\end{theorem}
\begin{proof}
    Note that, since $\beta$ restricts along $i$ to the identity, $P_E \beta$ and $A_M \beta$ restrict to $P_E^\rmf$ and $A_M^\rmf$ respectively. This gives unit maps $\nu \colon P_E\beta \to i_\# P_E^\rmf$ and $\xi \colon A_M \beta \to i_\# A_M^\rmf$. We show that these are isomorphisms.

    For convenience, we will use the codensity monad description of $\beta$. For the remainder of this proof, $Y$ will always denote a finite set. Let us spell out what $\nu_X$ is doing: given $g \colon X \to Y$, we have $\ang{g} \cdot \nu_X = \ang{g} + E \colon \beta X + E \to Y + E$.

    We now describe an inverse for $\nu_X$. Let $\psi \in i_\#P_E^\rmf(X)$, and let $!_X \colon X \to 1$ be the unique morphism into the terminal object, so $\ang{!_X}(\psi) \in 1 + E$. For any $f \colon X \to Y$, the cone condition applied to the commutative triangle
    \[\begin{tikzcd}
    X \ar[r, "f"] \ar[dr, "!_X"'] & Y \ar[d, "!_Y"] \\
    & 1
    \end{tikzcd}\]
    implies that $\ang{!_X} = P_E !_Y \cdot \ang{f}$. Thus, $\ang{!_X}(\psi) \in E$ iff $\ang{f}(\psi) \in E$, and both are equal if this is the case. Otherwise, $\ang{f}(\psi) \in Y$ for all $f$, and the cone condition shows that $\psi \in \beta X$. This gives a function $i_\# P_E^\rmf(X) \to P_E \beta X$, which is clearly the inverse of $\nu_X$.

    The construction of an inverse to $\xi_X$ is largely similar. One checks that for $\psi \in i_\#A_M^\rmf(X)$, the value of $\ang{!_X}(\psi) \in M$ determines what $M$-indexed component $\ang{f}(\psi)$ lands in for any $f$. Thus, $\psi$ amounts to the data of an ultrafilter on $X$ and an element of $M$.
\end{proof}

The fact that these monads come from distributive laws allow us to easily say what their algebras are. The category of algebras of $i_\# P_E^\rmf$ will be isomorphic to the category of algebras of the lift of $P_E$ to $\CHaus$, and similarly for $i_\# A_M^\rmf$.

\begin{corollary}
    Let $E$ be a finite set and $M$ be a finite monoid. Then:
    \begin{enumerate}[(i)]
        \item $\Set^{i_\# P_E^\rmf}$ is the category of $E$-pointed compact Hausdorff spaces, or, equivalently, $E/\CHaus$ where $E$ is given the discrete topology;
        \item $\Set^{i_\# A_M^\rmf}$ is the category of compact Hausdorff spaces equipped with a discrete left $M$-action and $M$-equivariant continuous maps.
    \end{enumerate}
\end{corollary}

\subsection{Extending the powerset monad}

We now turn our attention to the powerset monad $\PS^\rmf$ on $\FinSet$. This time there is no known distributive law between $\PS$ and $\beta$, although there is a \emph{weak} distributive law of $\PS$ over $\beta$ described by Garner~\cite[p.~349]{Garner2020}. Weak distributive laws were introduced by Street~\cite{Street2009} and B\"ohm~\cite{Bohm2010} as a generalisation of distributive laws where one of the unit axioms is dropped. Under certain conditions on the base category (which $\Set$ satisfies), weak distributive laws correspond to weak liftings of one of the monads to the category of algebras of the other. In our case, Garner's weak distributive law gives a weak lifting of $\PS$ to $\CHaus$, which is the Vietoris monad. It sends a compact Hausdorff space $X$ to its Vietoris hyperspace: the set of closed subspaces of $X$ equipped with a compact Hausdorff topology. Its algebras coincide with the algebras of the filter monad $F$, and they are the continuous lattices, which are special kinds of complete lattices with a compact Hausdorff topology. This makes them the perfect candidates for the algebras of $i_\# \PS^\rmf$, and indeed they will turn out to be. This subsection is devoted to proving that $i_\# \PS^\rmf$ is the filter monad.

\begin{lemma}\label{lem:F_monoid}
    The filter monad $F$ on $\Set$ restricts along $i \colon \FinSet \to \Set$ to $\PS^\rmf$.
\end{lemma}
\begin{proof}
    Recall that if $\calF$ is a filter on a finite set $Y$, then $\bigcap \calF = \bigcap_{A \in \calF} A$ is a finite intersection of elements of $\calF$, and hence is a least element of $\calF$, at which $\calF$ is principal. This shows that the map $\sigma_Y \colon i\PS^\rmf(Y) \to Fi(Y)$ sending $A \subseteq Y$ to $\pf A$ is an isomorphism. It is easily seen to be natural in $Y$, so that $\PS^\rmf$ is a lift of $F$ along $i$. Checking that the monad structures agree amounts showing that $\sigma$ is a colax transformation (see~\eqref{eq:colax}). The condition involving the units is immediate, while the other one follows from a routine calculation.
\end{proof}

This ensures that we have a unit map $F \to i_\# \PS^\rmf = i_\# i^\# F$. The following lemma will allow us to show that it is invertible.

\begin{lemma}\label{lem:TFAE_filter}
    Let $X$ be a set and $\phi \in i_\#\PS^\rmf(X)$. The following are equivalent for $A \subseteq X$:
    \begin{enumerate}[(i)]
        \item \label{part:f_i} $\ang{f}(\phi) \subseteq B$ for all $f \colon X \to Y$ with $Y$ finite, and $B \subseteq Y$ such that $f^{-1}B = A$,
        \item \label{part:f_ii} $\ang{f}(\phi) \subseteq B$ for some $f \colon X \to Y$ with $Y$ finite, and $B \subseteq Y$ such that $f^{-1}B = A$,
        \item \label{part:f_iii} $\ang{\chi_A}(\phi) \subseteq \{\top\}$.
    \end{enumerate}
\end{lemma}
\begin{proof}
    If $f \colon X \to Y$ is such that $Y$ is finite and $f^{-1}B = A$, then the triangle
    \[\begin{tikzcd}
    X \ar[dr, "\chi_A"'] \ar[r, "f"] & Y \ar[d, "\chi_B"] \\
    & 2
    \end{tikzcd}\]
    commutes. The cone condition implies that $\ang{\chi_A} = \PS \chi_B \cdot \ang{f}$, so that $\ang{\chi_A}(\phi) \subseteq \{\top\}$ iff $\ang{f}(\phi) \subseteq B$. Since $f$ was arbitrary, and such an $f$ always exists (e.g.\ $\chi_A$), this shows the equivalence between the three conditions.
\end{proof}

\begin{theorem}\label{thm:powerset_extension}
    The monad $i_\# \PS^\rmf$ is isomorphic to the filter monad.
\end{theorem}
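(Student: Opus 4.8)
The plan is to mirror the proof of Theorem~\ref{thm:codensity_ultrafilter}, with Lemma~\ref{lem:F_monoid} and Lemma~\ref{lem:TFAE_filter} playing the roles of the monoid structure on $\beta$ and Lemma~\ref{lem:TFAE_ultrafilter}. By Lemma~\ref{lem:F_monoid}, $F$ is a monoid in $\cat{K}(i,\PS^\rmf)$, so terminality of $i_\#\PS^\rmf$ among such monoids yields a canonical monad map $\theta\colon F \to i_\#\PS^\rmf$. Unwinding the construction exactly as in Theorem~\ref{thm:codensity_ultrafilter} (post-compose with $Ff$, then apply the finite-set isomorphism $\sigma\colon Fi\cong i\PS^\rmf$ of Lemma~\ref{lem:F_monoid}, which takes a filter to its least element), one gets: for a filter $\calF$ on $X$ and $f\colon X\to Y$ with $Y$ finite, $\theta_X(\calF)(f)$ is the least element of the principal filter $Ff(\calF)$ on $Y$, i.e.\ the smallest $B\subseteq Y$ with $f^{-1}B\in\calF$. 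Since $\theta$ is already a monad map, it suffices to exhibit an inverse for each $\theta_X$; then $\theta$ is a natural isomorphism and hence an isomorphism of monads.

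For $\phi\in i_\#\PS^\rmf(X)$, define $\calF_\phi\subseteq\PS X$ by declaring $A\in\calF_\phi$ iff $A$ satisfies the equivalent conditions of Lemma~\ref{lem:TFAE_filter}. The first task is to check that $\calF_\phi$ is a filter. That $X\in\calF_\phi$ follows from condition~\ref{part:f_iii}, since $\chi_X$ factors through $\{\top\}\hookrightarrow 2$, so naturality of $\phi$ forces $\phi(\chi_X)\subseteq\{\top\}$. For condition~(ii) of Definition~\ref{def:filters}, given $A,B\subseteq X$ I would use the pairing $p=\langle\chi_A,\chi_B\rangle\colon X\to 2\times 2$: naturality of $\phi$ along the two projections gives $\phi(\chi_A)=\PS\pi_1(\phi(p))$ and $\phi(\chi_B)=\PS\pi_2(\phi(p))$, from which $\phi(p)\subseteq\{(\top,\top)\}$ holds iff $\phi(\chi_A)\subseteq\{\top\}$ and $\phi(\chi_B)\subseteq\{\top\}$. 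Since $p^{-1}\{(\top,\top)\}=A\cap B$, conditions~\ref{part:f_ii} and~\ref{part:f_iii} then give $A\cap B\in\calF_\phi$ iff $A,B\in\calF_\phi$. Note that, unlike the ultrafilter case, no complementation (swap) argument is needed here.

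It remains to verify that $\phi\mapsto\calF_\phi$ is inverse to $\theta_X$. For $\theta_X(\calF_\phi)=\phi$: fixing $f\colon X\to Y$, taking $B=\phi(f)$ in condition~\ref{part:f_ii} shows $f^{-1}(\phi(f))\in\calF_\phi$, whence $\theta_X(\calF_\phi)(f)\subseteq\phi(f)$; conversely, for any $B$ with $f^{-1}B\in\calF_\phi$, condition~\ref{part:f_i} gives $\phi(f)\subseteq B$, whence $\phi(f)\subseteq\theta_X(\calF_\phi)(f)$, so the two agree on every $f$. For $\calF_{\theta_X(\calF)}=\calF$: by condition~\ref{part:f_iii}, $A\in\calF_{\theta_X(\calF)}$ iff $\theta_X(\calF)(\chi_A)\subseteq\{\top\}$, and $\theta_X(\calF)(\chi_A)$ is the smallest $B\subseteq 2$ with $\chi_A^{-1}B\in\calF$; a short case analysis over the four subsets of $2$ shows this is contained in $\{\top\}$ precisely when $A\in\calF$ (using $A\cap(X\setminus A)=\emptyset$). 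I expect the only mildly delicate point to be pinning down the explicit formula for $\theta_X$ from Lemma~\ref{lem:F_monoid} and the universal property; beyond that, the argument is a direct transcription of the ultrafilter proof, with ``$\subseteq\{\top\}$'' in place of ``$=\top$'' and intersections of subsets in place of points.
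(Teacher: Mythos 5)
Your proposal is correct and follows essentially the same route as the paper's proof: the monad map $\theta$ comes from Lemma~\ref{lem:F_monoid} via terminality, the inverse is built from the equivalent conditions of Lemma~\ref{lem:TFAE_filter}, the filter axioms are verified with the pairing into $2\times 2$, and mutual inverseness is checked exactly as you describe (the paper phrases the first direction as the equivalence $f^{-1}B\in\calF_\phi\iff\phi(f)\subseteq B$, which is your two containments combined). The only cosmetic difference is that the paper dispatches the second inverse check in one line, $\theta_X(\calF)(\chi_A)\subseteq\{\top\}\iff\{\top\}\in F\chi_A(\calF)\iff A\in\calF$, rather than by a case analysis on the subsets of $2$.
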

\begin{proof}
    Lemma~\ref{lem:F_monoid} and Theorem~\ref{thm:mnd_adj} give a canonical monad map $\nu \colon F \to i_\# \PS^\rmf$. Explicitly, given a set $X$, a filter $\calF$ on $X$, and $f \colon X \to Y$ with $Y$ finite, we get a filter $Ff(\calF)$ on $Y$, which is principal at $\bigcap Ff(\calF)$. Then, $(\ang{f} \cdot \nu_X)(\calF) = \bigcap Ff(\calF)$. We now construct an inverse to $\nu_X$.
    
    Given $\phi \in i_\# \PS^\rmf(X)$, we get a filter $\calF_\phi$ on $X$ by declaring that $A \in \calF_\phi$ iff it satisfies any of the equivalent conditions of Lemma~\ref{lem:TFAE_filter}. Certainly, $X \in \calF_\phi$ by condition \ref{part:f_iii}, since $\chi_X$ factors through $\{\top\}$. For $A, B \subseteq X$, consider the commutative diagram:
    \[\begin{tikzcd}
    & X \ar[dl, "\chi_A"'] \ar[dr, "\chi_B"] \ar[d, "p"] \\
    2 & 2 \times 2 \ar[r, "\pi_2"'] \ar[l, "\pi_1"] & 2
    \end{tikzcd}\]
    The cone conditions ensures that $\ang{\chi_A} = \PS \pi_1 \cdot \ang{p}$ and $\ang{\chi_B} = \PS \pi_2 \cdot \ang{p}$. Thus, $\ang{p}(\phi) \subseteq \{(\top,\top)\}$ iff $\ang{\chi_A}(\phi)$ and $\ang{\chi_B}(\phi)$ are contained in $\{\top\}$. As $p^{-1}\{(\top,\top)\} = A \cap B$, conditions \ref{part:f_ii} and \ref{part:f_iii} show that $A, B \in \calF_\phi$ iff $A \cap B \in \calF_\phi$.

    Lastly, we check that this provides an inverse to $\nu_X$. Given $\phi \in i_\# \PS^\rmf (X)$, for $f \colon X \to Y$ with $Y$ finite, $(\ang{f} \cdot \nu_X)(\calF_\phi)$ is the smallest $B \subseteq Y$ such that $f^{-1}B \in \calF_\phi$. But we have
    \[f^{-1}B \in \calF_\phi \iff \ang{f}(\phi) \subseteq B,\]
    the forward implication coming from Lemma~\ref{lem:TFAE_filter} part \ref{part:f_i}, and the backwards one from part \ref{part:f_ii}. Therefore, $(\ang{f} \cdot \nu_X)(\calF_\phi) = \ang{f}(\phi)$, as needed. In the other direction, for $\calF \in FX$ and $A \subseteq X$ we have
    \begin{align*}
        A \in \calF_{\nu_X(\calF)} &\iff (\ang{\chi_A} \cdot \nu_X)(\calF) \subseteq \{\top\} \\
        &\iff \{\top\} \in F\chi_A(\calF) \\
        &\iff \chi_A^{-1}\{\top\} = A \in \calF.
    \end{align*}
\end{proof}

The category of $F$-algebras was identified by Day~\cite[Thms.~3.3 and~4.5]{Day1975} as the category of continuous lattices and maps preserving directed joins and arbitrary meets. 

\begin{corollary}
    $\Set^{i_\# \PS^\rmf}$ is the category of continuous lattices.
\end{corollary}

The facts that a continuous lattice is a special kind of complete lattice, and that it has a canonical compact Hausdorff topology are then automatic from the pushforward construction, by taking $T = \PS$ in~\eqref{eq:monadic_square}.

\subsection{The rank of pushforwards}\label{ssc:rank}

In this last subsection we will show that, with the exception of Examples~\ref{ex:easy_pf_finset}\ref{ex:pff:T} and~\ref{ex:pff:S}, namely the cases of the two inconsistent monads, pushforwards along $i$ never have rank. 

This will be an easy consequence of the result that, for any regular cardinal $\lambda$, any subfunctor of a $\lambda$-accessible endofunctor of $\Set$ is $\lambda$-accessible. This can be proved from the fact that $\Set$ is strictly locally finitely presentable in the sense of Ad\'{a}mek, Milius, Sousa and Wi{\ss}mann~\cite[Def.~3.9]{Adamek2019}, and that, between such categories, $\lambda$-accessible functors coincide with $\lambda$-bounded ones~\cite[Thm.~4.11]{Adamek2019}. However, we give an elementary direct proof here.

\newcommand{\Sub}{\mathsf{Sub}}

\begin{proposition}\label{prop:acc}
    Let $\lambda$ be a regular cardinal, and $T \colon \Set \to \Set$ be $\lambda$-accessible functor. Any subfunctor of $T$ is $\lambda$-accessible.
\end{proposition}
\begin{proof}
    We will use the fact that a functor out of $\Set$ is $\lambda$-accessible iff, for all sets $X$, it preserves the colimit of the canonical diagram $D_X \colon \Sub_\lambda(X) \to \Set$, where $\Sub_\lambda(X)$ is the poset of subobjects of $X$ of cardinality less than $\lambda$. A proof of this (in the finitary case) can be found in~\cite[Cor.~2.7]{Adamek2019}. Recall that a colimit cocone for $D_X$ is $(p \colon Y \hookrightarrow X)$ indexed by $p \in \Sub_\lambda(X)$.

    Let $m \colon S \to T$ be a monic natural transformation, and consider the commutative square
    \[\begin{tikzcd}
    \colim SD_X \ar[r] \ar[d, hook, "\colim mD_x"'] & SX \ar[d, hook, "m_X"] \\
    \colim TD_X \ar[r, "\cong"] & TX.
    \end{tikzcd}\]
    The injectivity of the function on the left follows the construction of $\lambda$-filtered colimits in $\Set$, and the fact that $mD_X$ is pointwise injective. Hence, the map on the top is injective, so it suffices to show that it is surjective.

    This is trivial if $X = \varnothing$, so we assume otherwise. Let $s \in SX$. Then $m_X(s) = Tp(t)$ for some $p \colon Y \hookrightarrow X$ in $\Sub_\lambda(X)$ and $t \in TY$. Without loss of generality, we may assume that $Y$ is nonempty, so that $p$ has a retraction, say $rp = 1_Y$. We get two commuting squares, the inside and the outside faces of this diagram:
    \[\begin{tikzcd}
    SY \ar[r, "Sp"'] \ar[d, hook, "m_Y"'] & SX \ar[d, hook, "m_X"] \ar[l, bend right, "Sr"'] \\
    TY \ar[r, "Tp"] & TX \ar[l, bend left, "Tr"]
    \end{tikzcd}\]
    Then
    \begin{align*}
        m_X \cdot Sp \cdot Sr(s) &= Tp \cdot m_Y \cdot Sr(s) \\
        &= Tp \cdot Tr \cdot m_X(s) \\
        &= Tp \cdot Tr \cdot Tp(t) \\
        &= Tp(t) \\
        &= m_X(s),
    \end{align*}
    and hence $Sp \cdot Sr(s) = s$. This shows that $s$ is in the image of $\colim SD_X \to SX$, and, since $s$ was arbitrary, that this map is surjective.
\end{proof}

\begin{remark}
    This proposition is crucially limited to functors whose domain is $\Set$. For example, the inclusion $\Z \to \Q$ is an epimorphism in $\Ring$, and since $y \colon \Ring^\opp \to [\Ring,\Set]$ preserves limits, it gives a monomorphism $\Ring(\Q,-) \to \Ring(\Z,-)$. However, as rings, $\Z$ is finitely presentable, while $\Q$ is not even finitely generated, so $\Ring(\Z,-)$ is $\aleph_0$-accessible, but $\Ring(\Q,-)$ is not.
\end{remark}

\begin{theorem}
Let $T$ be a consistent monad on $\FinSet$ (i.e.\ $\eta^T$ is monic). Then $i_\# T$ does not have rank.
\end{theorem}
\begin{proof}
Since $\FinSet$ and $\Set$ have kernel pairs, monomorphisms in $\Mnd_\FinSet$ and $\Mnd_\Set$ are exactly those whose underlying natural transformation is monic (see Remark~\ref{rmk:monic}), which coincide with those which are pointwise monic. As $i$ preserves monomorphisms, both $i_* \colon [\FinSet,\FinSet] \to [\FinSet, \Set]$ and $\Ran_i$ (being a right adjoint) preserve monomorphisms, and then so does $i_\#$. It follows that $i_\# \eta^T \colon \beta \to i_\#T$ is a monic natural transformation. Since the ultrafilter monad does not have rank, Proposition~\ref{prop:acc} then implies that neither does $i_\#T$.

That the ultrafilter monad does not have rank is stated in several sources, e.g.\ in Borceux's book~\cite[Vol.~II, p.~235]{Borceux1994}, but seemingly always without proof. Here is a sketch of one: for a regular cardinal $\lambda$ and a set $X$ of cardinality at least $\lambda$, the collection of subsets of $X$ whose complement has cardinality less than $\lambda$ is a proper filter. By the ultrafilter lemma, it is contained in some ultrafilter $\calF$. One checks that $\calF$ is not in the image of $\beta m$ for any $m \in \Sub_\lambda(X)$. It follows that the map $\colim \beta {D_X} \to \beta X$ in the proof of Proposition~\ref{prop:acc} is not surjective, and hence that $\beta$ is not $\lambda$-accessible.
\end{proof}

\bibliography{C:/Users/drnai/OneDrive/Documents/References/monads}

\begin{thebibliography}{10}

\bibitem{Adamek2012}
J.~Adámek, S.~Milius, N.~Bowler, and P.~B. Levy.
\newblock Coproducts of monads on {S}et.
\newblock In {\em 27th Annual IEEE Symposium on Logic in Computer Science},
  pages 45--54, 2012.

\bibitem{Adamek2019}
J.~Adámek, S.~Milius, L.~Sousa, and T.~Wißmann.
\newblock On finitary functors.
\newblock {\em arXiv preprint arXiv:1902.05788}, 2019.

\bibitem{Adamek2021}
J.~Adámek and L.~Sousa.
\newblock D-ultrafilters and their monads.
\newblock {\em Advances in Mathematics}, 377(107486), Jan. 2021.

\bibitem{Appelgate1970}
H.~Appelgate and M.~Tierney.
\newblock Iterated cotriples.
\newblock In {\em Reports of the Midwest Category Seminar IV}, volume 137 of
  {\em Lecture Notes in Mathematics}, pages 56--99. Springer, 1970.

\bibitem{Avery2016}
T.~Avery.
\newblock Codensity and the {G}iry monad.
\newblock {\em Journal of Pure and Applied Algebra}, 220(3):1229--1251, 2016.

\bibitem{Avery2021}
T.~Avery and T.~Leinster.
\newblock Isbell conjugacy and the reflexive completion.
\newblock {\em Theory and Applications of Categories}, 36(12):306--347, 2021.

\bibitem{Barr1985}
M.~Barr and C.~Wells.
\newblock {\em Toposes, {T}riples and {T}heories}, volume 278.
\newblock Springer-Verlag, New York, 1985.

\bibitem{Bohm2010}
G.~B\"{o}hm.
\newblock The weak theory of monads.
\newblock {\em Advances in Mathematics}, 225(1):1--32, 2010.

\bibitem{Borceux1994}
F.~Borceux.
\newblock {\em Handbook of {C}ategorical {A}lgebra}.
\newblock Encyclopedia of Mathematics and its Applications. Cambridge
  University Press, 1994.

\bibitem{Day1975}
A.~Day.
\newblock Filter monads, continuous lattices and closure systems.
\newblock {\em Canadian Journal of Mathematics}, 27(1):50--59, 1975.

\bibitem{Day2007}
B.~J. Day and S.~Lack.
\newblock Limits of small functors.
\newblock {\em Journal of Pure and Applied Algebra}, 210(3):651--663, 2007.

\bibitem{Devlin2016}
B.~Devlin.
\newblock {\em Codensity, compactness and ultrafilters}.
\newblock PhD thesis, University of Edinburgh, 2016.

\bibitem{Diers1980}
Y.~Diers.
\newblock Multimonads and multimonadic categories.
\newblock {\em Journal of Pure and Applied Algebra}, 17(1):153--170, 1980.

\bibitem{Diers1981a}
Y.~Diers.
\newblock Complétions monadiques de quelques catégories sans produit.
\newblock {\em Rend. Mat.}, 7(1):659--669, 1981.

\bibitem{Diers1981}
Y.~Diers.
\newblock Some spectra relative to functors.
\newblock {\em Journal of Pure and Applied Algebra}, 22(1):57--74, 1981.

\bibitem{Garner2020}
R.~Garner.
\newblock The {V}ietoris {M}onad and {W}eak {D}istributive {L}aws.
\newblock {\em Applied Categorical Structures}, 28(2):339--354, 2020.

\bibitem{Kelly1982}
G.~M. Kelly.
\newblock {\em Basic {C}oncepts of {E}nriched {C}ategory {T}heory}.
\newblock Number~64 in Lecture Notes in Mathematics. Cambridge University
  Press, 1982.

\bibitem{Kennison1971}
J.~F. Kennison and D.~Gildenhuys.
\newblock Equational completion, model induced triples and pro-objects.
\newblock {\em Journal of Pure and Applied Algebra}, 1(4):317--346, 1971.

\bibitem{Lack2002}
S.~Lack and R.~Street.
\newblock The formal theory of monads {II}.
\newblock {\em Journal of Pure and Applied Algebra}, 175(1):243--265, 2002.
\newblock Special Volume celebrating the 70th birthday of Professor Max Kelly.

\bibitem{Leinster2013}
T.~Leinster.
\newblock Codensity and the ultrafilter monad.
\newblock {\em Theory and Applications of Categories}, 28(13):332--370, July
  2013.

\bibitem{Manes1969}
E.~Manes.
\newblock A triple theoretic construction of compact algebras.
\newblock In B.~Eckmann, editor, {\em Seminar on Triples and Categorical
  Homology Theory}, pages 91--118, Berlin, Heidelberg, 1969. Springer.

\bibitem{Riehl2017}
E.~Riehl.
\newblock {\em Category {T}heory in {C}ontext}.
\newblock Dover Publications, 2017.

\bibitem{Shirazi2024}
Z.~Shirazi.
\newblock Commutative codensity monads and probability bimeasures.
\newblock {\em arXiv preprint arXiv:2405.12917}, 2024.

\bibitem{Sipos2018}
A.~Sipoș.
\newblock Codensity and {S}tone spaces.
\newblock {\em Mathematica Slovaca}, 68(1):57--70, 2018.

\bibitem{Street1972}
R.~Street.
\newblock The formal theory of monads.
\newblock {\em Journal of Pure and Applied Algebra}, 2(2):149--168, 1972.

\bibitem{Street1976}
R.~Street.
\newblock Limits indexed by category-valued 2-functors.
\newblock {\em Journal of Pure and Applied Algebra}, 8(2):149--181, June 1976.

\bibitem{Street2009}
R.~Street.
\newblock Weak distributive laws.
\newblock {\em Theory and Applications of Categories}, 22(12):313--320, 2009.

\bibitem{Street1978}
R.~Street and R.~Walters.
\newblock Yoneda {S}tructures on 2-{C}ategories.
\newblock {\em Journal of Algebra}, 50(2):350--379, 1978.

\bibitem{Tholen1984}
W.~Tholen.
\newblock Pro-categories and multiadjoint functors.
\newblock {\em Canadian Journal of Mathematics}, 36(1):144--155, 1984.

\bibitem{VanBelle2022}
R.~Van~Belle.
\newblock Probability monads as codensity monads.
\newblock {\em Theory and Applications of Categories}, 38(21):811--842, 2022.

\end{thebibliography}

\end{document}